\theoremstyle{definition}
\newtheorem{theorem}{Theorem}[section]
\newtheorem{proposition}[theorem]{Proposition}
\newtheorem{lemma}[theorem]{Lemma}
\newtheorem{example}[theorem]{Example}
\newtheorem{remark}[theorem]{Remark}
\newtheorem{definition}[theorem]{Definition}
\newtheorem{conjecture}[theorem]{Conjecture}
\newtheorem{notation}[theorem]{Notation}
\newcommand{\uu}[1]{\underline{x}}
\newcommand{\QQ}{\mathbb{Q}}
\newcommand{\NN}{\mathbb{N}}
\newcommand{\RR}{\mathbb{R}}
\newcommand{\UU}{\mathcal{U}}
\newcommand{\SSS}{\mathbb{S}}
\newcommand{\N}{\mathcal{N}}
\newcommand{\VR}{\mathrm{VR}}
\newcommand{\card}{\mathrm{card}}
\newcommand{\Alpha}{\alpha}
\newcommand{\Mag}{\mathrm{Mag}}
\newcommand{\Mink}{\mathrm{Mink}}
\newcommand{\define}[1]{{\bf \boldmath{#1}}}
\begin{document}

\title{Alpha magnitude}

\author{
Miguel O'Malley\thanks{
Wesleyan University,
\texttt{momalley@wesleyan.edu}}, 
 Sara Kalisnik\thanks{
ETH Zurich,
\texttt{sara.kalisnik@math.ethz.ch}},   and 
Nina Otter\thanks{
Queen Mary University of London,
\texttt{n.otter@qmul.ac.uk}}
}

\date{}


\maketitle

\begin{abstract} Magnitude is an isometric invariant for metric spaces that was introduced by Leinster around 2010, and is currently the object of intense research, since it has been shown to encode many known invariants of metric spaces. In recent work, Govc and Hepworth introduced persistent magnitude, a numerical invariant of a filtered simplicial complex associated to a metric space. Inspired by Govc and Hepworth's definition, we introduce alpha magnitude and investigate some of its key properties. Heuristic observations lead us to conjecture a relationship with the Minkowski dimension of compact subspaces of Euclidean space. Finally, alpha magnitude presents computational advantages over both magnitude as well as Rips magnitude, and we thus propose it as a new measure for the estimation of fractal dimensions of real-world data sets that is easily computable.
\end{abstract}

\section{Introduction}

Magnitude was introduced by Tom Leinster  \cite{leinster2010} in his study of cardinality-like invariants. A special case of the notion introduced by Leinster is the magnitude of a metric space, which is an isometric invariant of the space. The study of magnitude is currently a thriving research area;   research directions include the study of the relationship of magnitude with many known invariants from integral geometry and geometric measure theory (see, e.g., the survey \cite{LM17}), fractal dimensions \cite{Meckes2015}, magnitude homology \cite{LS17,HW17} and its relationship with persistent homology \cite{O18}, the study of biodiversity  \cite{LC12,LM16,leinster21}, information theory \cite{LR21,leinster21}, and boundary-detection problems in machine learning \cite{bunch2020weighting}.

More recently,  Govc and Heptworth \cite{GH21}  introduced  persistent magnitude, an invariant of a metric space that depends on a choice of a filtered simplicial complex  associated to the metric space. Given a finite metric space $(X,d)$, and a filtered simplicial complex $K=\{K_i\}_{i\in \mathbb{N}}$ associated to it that satisfies certain finiteness assumptions,  Govc and Hepworth define the {\bf persistent magnitude} of $X$ (with respect to $K$) to be the positive real number
\[
\sum_{k=0}^\infty \sum_{i=1}^{m_k}(-1)^k(e^{-a_{k,i}}-e^{-b_{k,i}}), 
\]
where $\{[a_{k,i},b_{k,i})\}_{i=1}^{m_k}$ is the barcode in degree $k$ of $K$. 
Govc and Hepworth then proceed to study properties of the persistent magnitude associated to the filtered Vietoris--Rips complex of a metric space, which they call {\bf Rips magnitude}. They observe that the infinite sum reduces to a finite sum:
\[
\sum_{k=0}^\infty \sum_{i=1}^{m_k}(-1)^k(e^{-a_{k,i}}-e^{-b_{k,i}})\, 
= \sum_{k=0}^{\card(X)} \sum_{i=1}^{m_k}(-1)^k(e^{-a_{k,i}}-e^{-b_{k,i}})\, ,\]
and that similarly to magnitude, Rips magnitude gives a measure of the effective number of points of a metric space. Furthermore, they propose extensions of the definition to compact metric spaces, and study what value  these extensions take for several compact subsets of Euclidean space.

Motivated by the application of persistent magnitude to the study of point-cloud data, we note that, taking into consideration current state-of-the-art software implementations, Rips magnitude   can be computed in practice only for point clouds of fewer than 35 points.  When $\card(X)=35$, we have that in the worst case, the filtered Vietoris--Rips complex contains $2^{35}\approx 34\times 10^9$ many simplices, which is a size of simplicial complex that exceeds the size allowed by state-of-the-art software. Here we are using as reference  the results from the benchmarking in \cite{roadmap}\footnote{We note that even if current advances might result in results better than the ones reported in  \cite{roadmap}, this would not make a difference in the unfeasibility of the computation of the Rips magnitude for point-cloud data sets with software packages.}.

 We therefore propose {\bf alpha magnitude}, which is the persistent magnitude associated to the filtered alpha complex of a subspace of Euclidean space. We show that alpha magnitude presents several advantages over both Rips magnitude and magnitude. Firstly, alpha magnitude has a lower computational complexity than   Rips magnitude, as discussed in the previous paragraph, as well as magnitude, see the discussion at the end of Section \ref{SS:mag dim}. 
 Secondly, we show that alpha magnitude is defined for certain compact subspaces of Euclidean space for which Rips magnitude is not defined, see Proposition \ref{circlealpha}. 
 Finally, we proceed to define {\bf alpha magnitude dimension}, similarly to how Meckes defines {\bf magnitude dimension} in \cite{Meckes2015}. In \cite{Meckes2015} Meckes showed that for a large class of compact subsets of Euclidean space for which magnitude is defined, magnitude dimension is equivalent to Minkowski dimension. Based on several  computations (see examples in Section \ref{SS:alpha mag dim}) as well as heuristic observations (see the estimates  in Section \ref{S:experiments}), we conjecture that alpha magnitude dimension is equivalent to the Minkowski dimension of compact subspaces of Euclidean space for which it is defined.  This motivates using alpha magnitude to estimate the Minkowski dimension of spaces for which it would be difficult to do so theoretically, as well as for real-world datasets, which is a direction that we plan to pursue in future work.

\section{Preliminaries}
We introduce the main notions related to magnitude, its relationship to persistent homology, as well as fractal dimensions.

\begin{notation}
We make note of some notation conventions we will make use of. Bold face letters (i.e, $\mathbf{w},\mathbf{u} $) represent vectors. We represent the components of each bold face vector as non-bold letters with subscript (i.e, $\mathbf{w}^T=(w_1,w_2,\dots,w_m)$). By $\mathbf{u}$ we denote the standard unitary vector, that is, the $m\times 1$ vector with all entries equal to $1$. Matrices are either represented with capital letters or $\zeta_A$ for the similarity matrix for the space $A$. A matrix with one subscript $i$ represent the $i$th column of the matrix, that is, $D_i$ represents the $i$th column of $D$. The term $D_{i,j}$ represents the $(i,j)$th entry of $D$.
\end{notation}

\subsection{Magnitude}

In this section, as well as in the rest of the paper, unless otherwise specified, we consider the standard Euclidean metric on $\mathbb{R}^n$. First we review some basic definitions and standard results on the properties of distance matrices.

\begin{definition}
    Let $(X,d)$ denote a finite metric space $X$ with a distance function $d\colon X\to\RR$. We define the {\bf distance matrix} $D$ of $(X,d)$ to be the $\mathbf{card}(X)\times \mathbf{card}(X)$ matrix with entries $D_{i,j}=d(x_i,x_j)$ for $x_i,x_j\in X$. 
\end{definition}

\begin{definition}
    Let $(X,d)$ be a finite metric space and $D$ its distance matrix. The \textbf{similarity matrix} $\zeta_X$ of $(X,d)$ is defined to be the matrix with entries $(\zeta_X)_{i,j}=e^{-D_{i,j}}$.
\end{definition}
\begin{definition}
    Let $(X,d)$, $\zeta_X$ be as above. Suppose $\textbf{w}$ exists such that $\zeta_X\textbf{w}=\textbf{u}$. Then we call $\textbf{w}$ a \textbf{weighting} on $X$.
\end{definition}

\begin{definition}
    Suppose there exists a weighting $\textbf{w}$ on the space $X$. The {\bf magnitude} of $X$ is defined as $$|X|=\displaystyle \sum_{i=1}^{\mathbf{card}(X)} w_i. $$
\end{definition}

\begin{example}
	The magnitude of the two point space $(X,d_X)$, where $X=\{x_1,x_2\}$ and ${d_X(x_1,x_2)=d}$ is $\dfrac{2}{1+e^{-d}}$. We can see this easily, since the similarity matrix $$\zeta_X=\begin{bmatrix} 1& e^{-d}\\ e^{-d} & 1\end{bmatrix}$$ has a weighting $$\textbf{w}=\begin{bmatrix}\dfrac{1}{1+e^{-d}}\\ \dfrac{1}{1+e^{-d}} \end{bmatrix}. $$ In fact, for any space where the isometry group acts transitively, the magnitude is the cardinality of the space multiplied by the sum of the entries along any row of the similarity matrix. These spaces are referred to as \textbf{homogeneous}. 
	\end{example}

\begin{definition}
Let $(X, d)$ be a metric space and $t\in (0, \infty)$. Then $tX$ denotes
the metric space with the same points as $X$ and the metric $d_t(x, y) = td(x, y)$ for $x, y \in X$.
\end{definition}

\begin{definition}
    The \textbf{magnitude function} of a finite metric space $(X,d)$ is the partially-defined function $t\mapsto |tX|$ defined for all $t\in (0, \infty)$ such that $tX$ has magnitude.  
\end{definition}

It is known that magnitude is defined for all metric spaces with four or fewer points. Below is an example of a metric space with $5$ points, due to Leinster \cite[Example 2.2.7]{leinster2010}, for which the magnitude function is not defined for all values of $t$:

\begin{example} 
	 Let $K_{3,2}$ be the bipartite complete graph on $3$ and $2$ vertices, and endow its set of vertices with the shortest-path metric. Then the similarity matrix of $tK_{3,2}$ is $$t\zeta_{K_{3,2}}=\begin{bmatrix}1 & e^{-2t} & e^{-2t} & e^{-t} & e^{-t} \\ e^{-2t} & 1 & e^{-2t} & e^{-t} & e^{-t}\\  e^{-2t} & e^{-2t} & 1 & e^{-t} & e^{-t}\\ e^{-t} & e^{-t} & e^{-t} & 1 & e^{-2t}\\ e^{-t} & e^{-t} & e^{-t} & e^{-2t} & 1 \end{bmatrix}.$$ Computed symbolically, the magnitude function of this space is $\dfrac{5-7e^{-t}}{(1+e^{-t})(1-2e^{-2t})}\, .$ The magnitude function is discontinuous at $\log(\sqrt{2})$. We note that this space is ill-behaved in other ways as well; the magnitude function takes on values that are larger than the cardinality of the space,  and it can be negative and decreasing, see the aforementioned reference for details. 
	\end{example}

\subsection{Persistent homology}

Persistent homology is a method used to study qualitative features of data using topology. Here we are interested in studying data in the form of points clouds, i.e., finite metric spaces that arise from applications. Point clouds, being discrete topological spaces,  have no non-trivial topological features. Thus, the idea is to assign for every parameter $\epsilon >0$ a topological space, more specifically, a simplicial complex, to the point cloud and then track the evolution of the topological features as the parameter varies. In this paper we work with alpha complexes, but there are other families of complexes one can assign, for example, Čech and Vietoris-Rips complexes.

\begin{definition}
	An \textbf{abstract simplicial complex} $(\Sigma,V)$ is given by a set  $V$  whose elements we call \define{vertices} and a set   $\Sigma$ of non-empty finite subsets of $V$. This data satisfies the following properties: we have that (i) $\{v\}\in \Sigma$ for all $v\in V$ and (ii) if $\sigma\in \Sigma$ and $\tau\subset\sigma$, then $\tau\in\Sigma$. If $\sigma\in \Sigma$ has cardinality $p+1$, we say that $\sigma$ is a \define{$p$-simplex}, or a simplex of \define{dimension $p$}. A \define{simplex} is a $p$-simplex for some $p\in \mathbb{N}$.
	\end{definition}
	
	Alpha and \v{C}ech complexes are related to  a general construction in algebraic topology, called a `nerve':

\begin{definition}
	The \textbf{nerve}  of a cover $\UU=\{U_i\}_{i\in I}$ of a topological space $S$ is the abstract simplicial complex $N(\UU)$ with the vertex set given by $I$ such that 
	$\sigma=\{i_{1},\dots,i_{n}\}$ is a simplex in $\N(\UU)$ if and only if   $ U_{i_1}\cap \dots \cap U_{i_n}\neq\emptyset$.
	\end{definition}

Under suitable assumptions on the sets in the cover, one has that the geometric realisation  $|N(\UU)|$ of the nerve $N(\UU)$ has the same homotopy type as the space $S$. This result is known as `nerve lemma', and there exist many different versions of it in the literature. For the purposes of our work, here it suffices to consider the statement of the Nerve Lemma as given in \cite[III.2]{EH10}: if $S\subset \mathbb{R}^d$ and if the sets in the cover of $S$ are convex and closed, then  $|N(\UU)|\simeq S$.

	\begin{definition} Let $(X, d)$ be a finite metric subspace of Euclidean space $\mathbb{R}^d$. Let ${\UU_\epsilon=\{B_\epsilon(x)\colon x\in X\}}$ be the cover of $X$ given by closed balls of radius $\epsilon$ centered at the points of $X$. 
	The \textbf{\v{C}ech complex at scale $\epsilon$}  is the abstract simplicial complex $\check{C}(X)_\epsilon=N(\UU_\epsilon)$. 
	\end{definition}
	
	Thus, by the nerve lemma, we have that $\cup_{x\in X}B_\epsilon(x)\simeq |\check{C}(X)_\epsilon|$, see \cite[III.2]{EH10}. The \v{C}ech complex has the drawback that it can have simplices in dimensions higher than the embedding space. One therefore often considers instead the alpha complex, for which the dimension is at most the dimension of the ambient space, provided the points are in general position \cite[III.3]{EH10}.

	\begin{definition} Let $(X, d)$ be a finite metric subspace of Euclidean space $\mathbb{R}^d$. 
	The \textbf{alpha complex of $X$ at scale $\epsilon$}  is the abstract simplicial complex $\Alpha(X)_\epsilon$ given by  the nerve of the collection of sets $\{U_x\}_{x\in X}$, where
	\[
	U_x=B_\epsilon(x)\cap V_x 
	\] 
	and  
	\[
	V_x=\{p\in \mathbb{R}^d \colon d(p,x)\leq d(p,x')\,\,\forall x'\in X \}
	\]
	is the \define{Voronoi cell} around $x$.	\end{definition}

Again by the nerve lemma, we have that  the geometric realisation of the alpha complex is homotopy equivalent to $\cup_{x\in X} B_\epsilon(x)$, see \cite[III.4]{EH10}.

Another type of simplicial complex that is widely used in applications is the Vietoris--Rips complex. Here we recall its defintion, since this is also the complex considered by Govc and Hepworth in their work on persistent magnitude \cite{GH21}. 

\begin{definition}
Given a point cloud $X$ and a real number $\epsilon \geq 0$, we define the \define{Vietoris-Rips complex of $X$ and $\epsilon$} to be:
 \[
\textrm{VR}(X)_\epsilon = \{\sigma \subseteq X \,|\, d(x,y)\leq \epsilon,\forall x,y \in \sigma\}.
 \]

\end{definition} 
	
\noindent 	
In other words, $\textrm{VR}(X)_\epsilon$ consists of all subsets of $X$ whose diameter is no greater than $\epsilon$.

All the simplicial complexes introduced in this section yield filtered simplicial complexes: a \define{filtered simplicial complex} is a collection $K=\{K_\epsilon\}_{\epsilon\in \mathbb{R}_{\geq 0}}$ of simplicial complexes indexed by non-negative real numbers with the property that $K_\epsilon\subset K_{\epsilon'}$ whenever $\epsilon\leq \epsilon'$.

Applying the homology functor $H_k$ in degree $k$  to a filtered simplicial complex, we obtain what is called a `persistence module'.

\begin{definition}
	A \textbf{persistence module} $\mathbf{V}$ is a collection of indexed vector spaces $\{ V_t|\,t\in\RR \}$ and linear maps $\{ v_b^a|\,v_b^a\colon V_a\to V_b,\,a\leq b \}$ such that the composition has the properties $v_c^b\circ v_b^a=v_c^a$ whenever $a\leq b\leq c$ and $v^a_b$ is the identity map whenever $a=b$. 
\end{definition}

If the homology $H_k$ is taken with coefficients in a field, and we consider a filtered simplicial complex $K$ associated to a finite point cloud, then the isomorphism class of the persistence module $H_k(K)$ can be classified by a finite collection of intervals, called a \define{barcode}.  We can interpret each interval in the barcode as describing the lifetime of a topological feature which appears at the value of a parameter given by the left-hand endpoint of the interval and disappears at the value given by the right-hand endpoint. 

\begin{example}
	Let $X\subset\RR^2$ such that $X=\{ (0,0),(1,0),(1/2,\sqrt{3}/2)  \}$. Below we show the alpha complex $\alpha(X)_\epsilon$ for different values of epsilon. 
	
	\begin{figure}[h!]
		\centering
		\begin{tikzpicture}[scale=2]
		\coordinate (A) at (-3,0) ;
		\coordinate (B) at (-2,0);
		\coordinate (C) at (-2.5,{sqrt(3)/2});
		\coordinate (Z) at (-2.5,-.5);
		\node at (A) [below left] {(0,0)};
		\node at (A) {$\bullet$};
		\node at (B) [below right] {(1,0)};
		\node at (B) {$\bullet$};
		\node at (C) [above] {(1/2,{$\sqrt{3}$/2})};
		\node at (C) {$\bullet$};
		\node at (Z) [below] {$\epsilon=0$};
		\filldraw[opacity=0, blue] (A)  -- (B) -- (C) -- cycle;
		\coordinate (D) at (-.5,0) ;
		\coordinate (E) at (.5,0);
		\coordinate (F) at (0,{sqrt(3)/2});
		\coordinate (Y) at (0,-.5);
		\node at (D) [below left] {(0,0)};
		\node at (D) {$\bullet$};
		\node at (E) [below right] {(1,0)};
		\node at (E) {$\bullet$};
		\node at (F) [above] {(1/2,{$\sqrt{3}$/2})};
		\node at (F) {$\bullet$};
		\draw[black, very thick] (D)--(E);
		\draw[black, very thick] (E)--(F);
		\draw[black, very thick] (F)--(D);
		\node at (Y) [below] {$\epsilon=1/2$};
		\filldraw[opacity=0, blue] (A)  -- (B) -- (C) -- cycle;
		\coordinate (G) at (2,0) ;
		\coordinate (H) at (3,0);
		\coordinate (I) at (2.5,{sqrt(3)/2});
		\coordinate (X) at (2.5,-.5);
		\node at (G) [below left] {(0,0)};
		\node at (G) {$\bullet$};
		\node at (H) [below right] {(1,0)};
		\node at (H) {$\bullet$};
		\node at (I) [above] {(1/2,{$\sqrt{3}$/2})};
		\node at (I) {$\bullet$};
		\draw[black, very thick] (G)--(H);
		\draw[black, very thick] (H)--(I);
		\draw[black, very thick] (I)--(G);
		\node at (X) [below] {$\epsilon=1/\sqrt{3}$};
		\filldraw[opacity=0.5, cyan] (G)  -- (H) -- (I) -- cycle;
		\end{tikzpicture}
	\end{figure}
	Then the corresponding barcode will be:
	\begin{figure}[h!]
		\centering
		\begin{tikzpicture}[scale=2]
		\draw[dashed, lightgray] (0,0) -- (0,.8);
		\draw[dashed, lightgray] (2,0) -- (2,.8);
		\draw[dashed, lightgray] ({4/sqrt(3)},0) -- ({4/sqrt(3)},.8);
		\draw[dashed, lightgray] (4,0) -- (4,.8);
		\draw [-latex,shorten >=-3pt] (-.2,0) -- (4,0) node [below right] {$\phantom{\sqrt{3}}\epsilon\phantom{\sqrt{3}}$};
		\foreach \x in  {0,2,4}
		\node at (\x,0) {$\bullet$};
		\node at (0,0) [below] {0};
		\node at (2,0) [below] {1/2};
		\node at (4,0) [below] {1};
		\node at ({4/sqrt(3)},0) {$\bullet$}
		node at ({4/sqrt(3)},0) [below right] {$1/\sqrt{3}$};
		\draw [*->,shorten <=-2.4pt] (0,.2) -- (4,.2);
		\draw [*-{*[fill=white]},shorten >=-2.4pt,shorten <=-2.4pt] (0,.4) -- (2,.4) ;
		\draw [*-{*[fill=white]},shorten >=-2.4pt,shorten <=-2.4pt] (0,.6) --  (2,.6) ;
		\draw [*-{*[fill=white]},shorten >=-2.4pt,shorten <=-2.4pt] (2,.8) -- node [above] {} ({4/sqrt(3)},.8)   ;
		\draw (0,.1) -- (-.1,.1) -- node[left]{$H_0$} (-.1,.7) -- (0,.7);
		\draw (0,.75) -- (-.1,.75) -- node[left]{$H_1$} (-.1,.9) -- (0,.9);
		\end{tikzpicture}
	\end{figure}
\end{example}

\noindent
For more details about persistent homology, see~\cite{CDGO16, carlsson2014}.

\subsection{Magnitude homology and persistent homology}\label{SS:pers magn}

Magnitude homology is a homology theory for metric spaces that was introduced as a categorification of magnitude for finite graphs in \cite{HW17} and for finite metric spaces in \cite{LS17}.
Magnitude homology and persistent homology were first related in \cite{O18}. In that paper, the author introduced `blurred magnitude homology', which is the homology of a filtered simplicial set called the `enriched nerve'. 

\begin{definition}
Let $(X,d)$ be a metric space. The \define{blurred magnitude homology} of $(X,d)$ is the homology of the filtered simplicial set with set of $n$-simplices given by 
\[
N(X)(\epsilon)_n=\left \{(x_0,\dots , x_n)\mid \sum_{i=0}^{n-1}d(x_i,x_{i+1})\leq \epsilon \right\}
\]
for each $\epsilon\in \mathbb{R}_{\geq 0}$. The filtered simplicial set $N(X)$ is the \define{enriched nerve} of $X$.
\end{definition}

In \cite[Theorem 4.9]{GH21} the authors show that for any finite metric space $(X,d)$, the magnitude $|tX|$ can be recovered from the barcode of the blurred magnitude homology of $X$. More precisely, they show that for $t$ large enough one has
\begin{equation*}\label{Eq:magn blurred ph}
|tX|=\sum_{k=0}^\infty\sum_{i=1}^{m_k} (-1)^k(e^{-a_{k,i}t}-e^{-b_{k,i}t}),
\end{equation*}
where $\{[a_{k,i},b_{k,i})\}_{i=1}^{m_k}$ is the barcode of the homology in degree $k$ of $N(X)$. 
Motivated by this connection, the authors then proceed to define a new invariant for persistence modules.

\begin{definition}
Let $M$ be a finitely presented persistence module with barcode decomposition in  degree $k$ given by $\{[a_{k,i},b_{k,i})\}_{i=1}^{m_k}$. The \define{persistent magnitude} of $M$ is the real number
\begin{equation*}\label{E:pm}
|M|=\sum_{k=0}^\infty \sum_{i=1}^{m_k}(-1)^k(e^{-a_{k,i}}-e^{-b_{k,i}})\, .
\end{equation*}
The \define{persistent magnitude function} of $M$ is the function
\begin{alignat}{2}
(0,\infty)&\notag\to \mathbb{R}\\
t&\notag\mapsto |tM|\, .
\end{alignat}
\end{definition}

Given a metric space $(X,d)$, Govc and Hepworth then proceed to define Rips magnitude as the magnitude of the persistence module obtained by computing the persistent homology of the Vietoris--Rips complex\cite{{GH21}}.

\begin{definition}
Let $(X,d)$ be a finite metric space. Its \define{Rips magnitude} is the persistent magnitude of the persistent homology of $X$ with respect to the Vietoris--Rips complex:
\[
|X|_{\mathrm{Rips}}=|H_\star(\VR(X))|\, .
\]
The \define{Rips magnitude function} is the function

\begin{alignat}{2}
(0,\infty)&\notag\to \mathbb{R}\\
t&\notag\mapsto |tX|_{\mathrm{Rips}}\, .
\end{alignat}
\end{definition}

\subsection{Fractal dimensions}

 Fractals are used to model complex physical phenomena such as eroded coastlines or snowflakes, in which similar patterns recur at progressively smaller scales, and in describing phenomena such as fluid turbulence, crystal growth, and galaxy formation. One measure of the complexity of a fractal is its fractal dimension, which can be thought of as an extension of the standard notion of dimension beyond integers. A point ought to have dimension $0$, a line dimension $1$, a solid in $\RR^2$ dimension $2$, and so on. This is perhaps best understood by the observation that expanding one of these token spaces by a scalar $r$ results in an expansion of the volume by a factor of $r^d$ where $d$ is the dimension of the space that we consider. 
 Yet we frequently encounter spaces such as the Cantor set or the Koch snowflake which, in some sense, seem to exist between integer dimensions. That is, an expansion of the middle-thirds Cantor set by $3$ results in a space which contains in essence two copies of the Cantor set we have begun with, hence twice the volume.  Measures of fractal dimension seek to quantify and expand upon this intuition. Here we examine two fractal dimensions in connection to magnitude: the Minkowski dimension and the Hausdorff dimension.
 \begin{figure}[h!]
 \centering
 \begin{tikzpicture}[decoration=Cantor set, line width=10, scale=2.3]
  \draw decorate{ (0,0) -- (3,0) };
  \draw decorate{ decorate{ (0,-.5) -- (3,-.5) }};
  \draw decorate{ decorate{ decorate{ (0,-1) -- (3,-1) }}};
  \draw decorate{ decorate{ decorate{ decorate{ (0,-1.5) -- (3,-1.5) }}}};
\end{tikzpicture}
\caption{Level sets of the middle thirds Cantor set: the $0$th level (not depicted here) is given by the unit interval; the $1$st level is given by removing the middle third from the unit interval, and is depicted on top of the figure, the $2$nd level is given by removing the middle thirds from the intervals in the 1st level, and is depicted right below, and so on. Refer to the statement of Proposition \ref{cantoralpha} for a rigorous definition. Note the self similarity between the level sets; each contains two copies of the prior level set scaled by $1/3$.}
\end{figure}
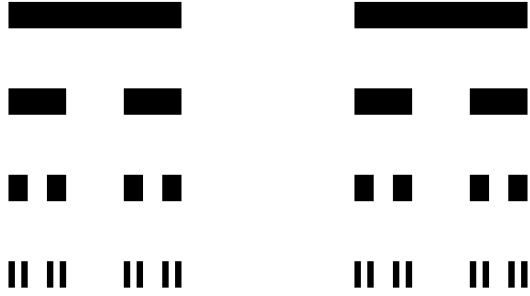
 
\begin{definition}
	Let $\epsilon>0$. Denote by $N(X,\epsilon)$ the minimum number of closed balls of radius $\epsilon$ needed to cover $X$. Then the  \textbf{upper Minkowski dimension} or box-counting dimension of $X$ is defined to be $$\overline{\text{dim}}_{Mink}X\colon=\limsup\limits_{\epsilon\to0}\dfrac{\log N(X,\epsilon)}{\log 1/\epsilon}, $$ and the \textbf{lower Minkowski dimension} $$\underline{\text{dim}}_{Mink}X\colon=\liminf\limits_{\epsilon\to 0}\dfrac{\log N(X,\epsilon)}{\log 1/\epsilon}. $$ We say that the \textbf{Minkowski dimension} of $X$ exists if the following limit $$\text{dim}_{Mink}X\colon=\lim\limits_{\epsilon\to0}\dfrac{\log N(X,\epsilon)}{\log 1/\epsilon} $$ exists. In this case, the upper, lower, and Minkowski dimensions of $X$ are equivalent.
	\end{definition}

\begin{definition} Let $X\subset \RR^n$. 
Define $H_r^d(X)$ to be the number
\[\inf \left\{\sum\limits_{i\in I} r_i^d\colon \text{ X is covered by countably many balls of diameter }0<r_i<r  \textrm{ for $i\in I$}  \right\},    \] and let \[ H^d(X)=\lim\limits_{r\to 0} H_r^d(X). \]
The {\bf Hausdorff dimension} of the set $X$ is 
\[\textrm{dim}_H(X)=\inf\{ d\geq 0 \,|\,H^d(X)=0\}.
\]
\end{definition}

In general, $\dim_{Mink}(X)\geq\dim_H(X)$. This follows since if we suppose 
\[
d>\liminf \frac{\log N(X,\epsilon)}{\log 1/\epsilon}=\underline{\text{dim}}_{Mink}X\, ,
\]
then for a cover of $X$ by $N(X,\epsilon)$ balls $B_i$ of radius $\epsilon/2$, we have \[ S_\epsilon=\sum\limits_{i=1}^{N(X,\epsilon)}\textrm{diam}(B_i)^d=N(X,\epsilon)\epsilon^d=\epsilon^{d-\frac{\log N(X,\epsilon)}{\log 1/\epsilon}}   \] so that $\inf\limits_{\epsilon>0} S_\epsilon=0$. Thus $d$ cannot be the Hausdorff dimension if $d$ is larger than the lower Minkowski dimension \cite{CB17}.

\begin{remark}
While Minkowski and Hausdorff dimension are two prominent sorts, other dimensions such as correlation and information dimension exist and are often used to estimate fractal dimension. We refer the reader to \cite{PG83} for further information on these concepts.  
\end{remark}

\begin{example}
 We take a look at Minkowski and Hausdorff dimensions for some standard subsets of $\RR$ and $\RR^2$.
    \begin{itemize}
        \item Let $I$ be the unit interval. Then $\dim_{Mink}(I)=\dim_H(I)=1$. Intuitively, one would expect the limit $\lim\limits_{\epsilon\to0}\dfrac{\log N(I,\epsilon)}{\log 1/\epsilon}$ to be $1$, as the relationship between $1/\epsilon$ and $N(I,\epsilon)$ is linear. That is, it takes $1/\epsilon$ many $\epsilon$ balls to cover the unit interval.
        \item Let $\SSS^1$ denote the unit circle with metric inherited from $\RR^2$. Then 
        \[
        \dim_{Mink}(\SSS^1)=\dim_H(\SSS^1)=1.
        \] The intuition for this example is identical to the unit interval. 
        \item Let $C$ denote the middle-thirds Cantor set. Then $\dim_{Mink}(C)=\dim_H(C)=\frac{\log2}{\log 3}$. This is most easily seen through self-similarity. That is, if we make the space three times as large, we need two times as many balls to cover it. Hence, we would expect $\lim\limits_{\epsilon\to0}\dfrac{\log N(C,\epsilon)}{\log 1/\epsilon}=\log_32=\frac{\log2}{\log3}$.
        \item Let $K$ denote the Koch snowflake, see \cite{CB17}. Then $\dim_{Mink}(K)=\dim_H(K)=\frac{\log 4}{\log 3}$. The rationale is similar to that of the Cantor set. That is, if the Koch snowflake is made three times as large, the perimeter can be seen to be $4$ times as long.
        \item Let $\QQ\cap I=Q$. Then $\dim_{Mink}(Q)=1$ but $\dim_{H}(Q)=0$. Here the Hausdorff dimension is easily seen to be $0$, as for any value of $d>0$ we simply give each point its own ball. One can see  that $\dim_{Mink}(Q)=\dim_{Mink}(I)$ by the consideration that we cannot remove any of the balls counted in $N(I,\epsilon)$ as doing so would uncover some point of $Q$. 
    \end{itemize}
   Detailed proofs of these statements may be found in \cite{CB17}. 
\end{example}

\section{Alpha magnitude}
 As discussed in Section \ref{SS:pers magn}, in \cite{GH21} the authors define Rips magnitude as the magnitude of the persistent homology of the Vietoris--Rips complex of a finite metric space. 
 However, this is limited by the complexity of computing the persistent homology of the Vietoris--Rips complex up to a degree given by the cardinality of the space, and we therefore 
 introduce alpha and \v{C}ech magnitude, as the magnitude of the persistent homology of the alpha and \v{C}ech complexes associated to a finite set of points in Euclidean space. As illustrated by Proposition \ref{P:alpha magn}, this notion of persistent magnitude is better suited for computational purposes, because we only need to compute persistent homology up to degree $d$, where $d$ is the  dimension of the ambient space.  We then proceed to study some properties of these invariants and define them for a larger class of compact subsets of Euclidean space. 

\subsection{Alpha and \v{C}ech magnitude of finite subspaces of Euclidean space}

\begin{definition}
Let $(X, d)$ be a finite metric subspace of Euclidean space $\mathbb{R}^d$.
 The \define{alpha magnitude} of $(X,d)$ is the persistent magnitude of the persistent homology of $X$ with respect to the alpha complex:
\[
|X|_{\Alpha}=|H_\star(\Alpha(X))|\, .
\]
The \define{alpha magnitude function} is the function

\begin{alignat}{2}
(0,\infty)&\notag\to \mathbb{R}\\
t&\notag\mapsto |tX|_{\Alpha}\, .
\end{alignat}
\end{definition}

\begin{definition}
The \define{\v{C}ech magnitude} of $(X,d)$ is the persistent magnitude of the persistent homology of $X$ with respect to the \v{C}ech complex:
\[
|X|_{\textrm{\v{C}ech}}=|H_\star(\textrm{\v{C}ech}(X))|\, .
\]
The \define{\v{C}ech magnitude function} is the function

\begin{alignat}{2}
(0,\infty)&\notag\to \mathbb{R}\\
t&\notag\mapsto |tX|_{\textrm{\v{C}ech}}\, .
\end{alignat}
\end{definition}

\begin{lemma}\label{L:alpha and cech}
Let $(X, d)$ be a finite metric subspace of Euclidean space $\mathbb{R}^d$. Then for any $t\in \mathbb{R}_{\geq 0}$ we have $|tX|_{\Alpha}=|tX|_{\textrm{\v{C}ech}}$.
\end{lemma}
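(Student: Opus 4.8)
The plan is to show that the alpha complex and the Čech complex of $X$, though different as filtered simplicial complexes, have the same persistent magnitude because persistent magnitude only depends on the ranks of the persistence modules $H_k$ together with their barcodes, and these barcodes turn out to coincide (up to a combinatorial reshuffling that does not change the alternating sum). The natural mechanism is the Nerve Lemma. For each fixed scale $\epsilon$, both $|\check{C}(X)_\epsilon|$ and $|\Alpha(X)_\epsilon|$ are homotopy equivalent to the union $\bigcup_{x\in X}B_\epsilon(x)$, as recalled in the excerpt (citing \cite[III.2, III.4]{EH10}); the sets $U_x = B_\epsilon(x)\cap V_x$ are closed and convex, so the hypotheses of the version of the Nerve Lemma stated in the preliminaries are met. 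The key point I would stress is that these homotopy equivalences are \emph{natural} with respect to the inclusion maps induced by increasing $\epsilon$: the standard proof of the Nerve Lemma produces a zig-zag (or a direct map, in the persistent-nerve form) that commutes with the structure maps of the filtration. I would invoke the persistent Nerve Lemma in the form due to Chazal--Oudot (or Bauer--Kerber--Reininghaus--Wagner), which states precisely that if a cover varies monotonically with the filtration parameter and satisfies the good-cover condition at each scale, then the nerve filtration and the union filtration are isomorphic as persistence modules after applying $H_k$.

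The steps, in order: (1) Fix $k\geq 0$ and $t\in\mathbb{R}_{\geq 0}$; rescale so that we may as well work with $X$ itself, since $|tX|_\bullet$ is by definition the persistent magnitude of the rescaled space. (2) Observe that for every $\epsilon$, both $\{B_\epsilon(x)\}_{x\in X}$ and $\{U_x(\epsilon)\}_{x\in X}$ are covers by closed convex sets of their respective unions, and that both unions equal $\bigcup_x B_\epsilon(x)$ — for the alpha cover this is because $V_x$ partitions $\mathbb{R}^d$, so $\bigcup_x (B_\epsilon(x)\cap V_x) = \bigcup_x B_\epsilon(x)$. (3) Apply the persistent Nerve Lemma to get isomorphisms of persistence modules $H_k(\check{C}(X))\cong H_k\bigl(\bigcup_x B_\epsilon(x)\bigr)\cong H_k(\Alpha(X))$, compatible with all structure maps. (4) Isomorphic persistence modules have the same barcode in each degree $k$, hence the same alternating sum $\sum_k\sum_i(-1)^k(e^{-a_{k,i}}-e^{-b_{k,i}})$; this is exactly $|tX|_{\Alpha}=|tX|_{\check{C}\mathrm{ech}}$. (5) Finally I would note the finiteness hypotheses needed for the persistent magnitude to be a well-defined real number are satisfied since $X$ is finite, so both complexes are finite and all barcodes have finitely many bars.

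The main obstacle is entirely in step (3): making precise that the Nerve Lemma can be applied \emph{filtration-wise and naturally}, rather than just at each fixed $\epsilon$ separately. A pointwise family of isomorphisms $H_k(\check{C}(X)_\epsilon)\cong H_k(\Alpha(X)_\epsilon)$ does \emph{not} by itself imply the persistence modules are isomorphic — one genuinely needs the maps to commute with the transition maps, and this is the content of the persistent version of the Nerve Lemma. The cleanest route is to cite an existing persistent Nerve Lemma and verify its hypotheses; an alternative, more self-contained route would be to build an explicit filtered map $\Alpha(X)\hookrightarrow \check{C}(X)$ (the alpha complex is a subcomplex of the Čech complex at each scale, since $U_x\subseteq B_\epsilon(x)$ forces every alpha simplex to be a Čech simplex) and show this inclusion induces an isomorphism on persistent homology via a filtered deformation-retraction / discrete Morse argument. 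Either way, once the persistence modules are identified, everything else is a formal consequence of the definition of persistent magnitude as a barcode invariant.
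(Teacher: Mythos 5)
Your argument is correct and follows the same basic strategy as the paper: invoke the Nerve Lemma to identify both $\Alpha(X)_\epsilon$ and $\check{C}(X)_\epsilon$ with the union of balls $\bigcup_{x\in X}B_\epsilon(x)$, and conclude that the two filtrations have the same barcodes and hence the same persistent magnitude. In fact you are more careful than the paper's own two-line proof, which only states the scale-wise homotopy equivalences $\Alpha(X)_\epsilon\simeq \check{C}(X)_\epsilon$ and leaves implicit the point you correctly single out as the main obstacle: a family of isomorphisms $H_k(\Alpha(X)_\epsilon)\cong H_k(\check{C}(X)_\epsilon)$ for each $\epsilon$ does not by itself give an isomorphism of persistence modules, and one needs the equivalences to commute with the transition maps, i.e.\ a persistent (functorial) Nerve Lemma of the Chazal--Oudot / Bauer et al.\ type, whose hypotheses are indeed satisfied here since the cover sets $B_\epsilon(x)\cap V_x$ and $B_\epsilon(x)$ are closed, convex, and grow monotonically in $\epsilon$. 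So the only substantive difference is that you supply the naturality step the paper takes for granted; your remark that the alpha complex is a filtered subcomplex of the Čech complex also points to a viable alternative, more self-contained argument, though the citation route you propose is the cleaner one.
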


\begin{proof}
By the nerve lemma we know that both $\Alpha(X)_\epsilon$ as well as $\textrm{\v{C}ech}(X)_\epsilon$ have the same homotopy-type as the union of balls with radius $\epsilon$ centered at the points in $X$. Thus, we have that
$\Alpha(X)_\epsilon\simeq \textrm{\v{C}ech}(X)_\epsilon$ for any $\epsilon\in \mathbb{R}_{\geq 0}$. 
\end{proof}

We proceed to study properties of the alpha magnitude;
by Lemma \ref{L:alpha and cech} such properties also hold for the \v{C}ech magnitude. First, we restate a proposition from \cite{GH21}.

\begin{proposition}\label{P:alpha magn}
Let $(X, d)$ be a finite metric subspace of $ \mathbb{R}^d$ and assume that the $k$th barcode for $(X, d)$ constructed from alpha complexes is $\{[a_{k,i},b_{k,i})\}_{i=1}^{m_k}$. Then the alpha magnitude function of $(X, d)$ is given by:

    \begin{equation} |tX|_{\Alpha}=\sum_{k=0}^d \sum_{i=0}^{m_k}(-1)^k(e^{-a_{k,i}t}-e^{-b_{k,i}t})\, .
    \end{equation}
 Furthermore, the alpha magnitude captures the `effective number of points' of a space, i.e, we have that  
 \begin{equation}\lim_{t\to 0}|tX|_{\Alpha}=1
 \end{equation}
 and 
 \begin{equation}
     \lim_{t\to \infty}|tX|_{\Alpha}=\# X\, .
  \end{equation}
\end{proposition}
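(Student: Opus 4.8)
The plan is to derive all three statements from the barcode formula for persistent magnitude together with the finiteness properties of the alpha complex. First I would establish the dimension bound on the barcode: since $X\subset\mathbb{R}^d$ is finite, for a generic choice of points the Voronoi cells are in general position and the alpha complex $\Alpha(X)_\epsilon$ has dimension at most $d$ for every $\epsilon$ (this is the cited fact from \cite[III.3]{EH10}). Hence $H_k(\Alpha(X))=0$ for $k>d$, so $m_k=0$ for $k>d$, and the infinite sum defining $|tX|_{\Alpha}=|H_\star(\Alpha(X))|$ collapses to the finite sum $\sum_{k=0}^d\sum_{i=1}^{m_k}(-1)^k(e^{-a_{k,i}t}-e^{-b_{k,i}t})$. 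For this I only need to substitute the rescaled metric $tX$ into the definition of persistent magnitude; since $\Alpha(tX)_\epsilon = \Alpha(X)_{\epsilon/t}$, rescaling the metric by $t$ rescales every barcode endpoint by $t$, which is exactly why the persistent magnitude function evaluated at $t$ has the $e^{-a_{k,i}t}$, $e^{-b_{k,i}t}$ form. This gives equation (1).

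Next, for the limit as $t\to\infty$: in each term $e^{-a_{k,i}t}-e^{-b_{k,i}t}$, note that $b_{k,i}>a_{k,i}\geq 0$, and crucially every bar with $a_{k,i}=0$ has finite right endpoint $b_{k,i}<\infty$ except possibly for the infinite bars. I would separate the $k=0$ barcode into the bars that are born at $\epsilon=0$. At $\epsilon=0$ the alpha complex is exactly $\#X$ isolated points, so $H_0$ has dimension $\#X$; as $\epsilon$ grows the union of balls eventually becomes contractible, so exactly one of these $H_0$ bars is infinite and the other $\#X - 1$ bars are of the form $[0,b_{0,i})$ with $0<b_{0,i}<\infty$. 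For the infinite bar $[0,\infty)$ the contribution is $e^{0}-e^{-\infty}=1-0=1$ in the limit (and, reading \cite{GH21}'s convention, an infinite bar contributes $e^{-a_{k,i}t}$). Every other bar — whether in degree $0$ with $0<b_{0,i}<\infty$, or in any higher degree where all bars are bounded with $a_{k,i}>0$ (since $H_k(\Alpha(X)_0)=0$ for $k\geq 1$) — has $e^{-a_{k,i}t}-e^{-b_{k,i}t}\to 0$ as $t\to\infty$, because either $a_{k,i}>0$ forces the whole term to $0$, or $a_{k,i}=0$ but then $-e^{-b_{k,i}t}\to 0$ and $e^{0}=1$... wait, I need to be careful here: a degree-$0$ bar $[0,b_{0,i})$ contributes $1-0=1$ in the limit, not $0$. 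So the correct accounting is: there are $\#X$ bars in degree $0$ born at $0$ (one infinite, $\#X-1$ finite), each contributing $1$ in the limit as $t\to\infty$, giving $\#X$; and all remaining bars (those with positive birth time, in any degree) contribute $0$. This yields equation (3), $\lim_{t\to\infty}|tX|_{\Alpha}=\#X$.

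Finally, for the limit as $t\to 0$: here $e^{-a_{k,i}t}-e^{-b_{k,i}t}\to 0$ for every \emph{bounded} bar, since both exponentials tend to $1$. For the unique infinite bar $[0,\infty)$ in degree $0$, the contribution $e^{-a_{0,i}t}=e^0=1$ for all $t$, hence tends to $1$. So the only surviving term is the infinite $H_0$ bar, giving $\lim_{t\to 0}|tX|_{\Alpha}=1$, which is equation (2); intuitively, as $t\to 0$ the space looks like a single point. The main obstacle, and the step requiring the most care, is the bookkeeping around the infinite bar and the convention for how infinite bars contribute to persistent magnitude: one must justify that the alpha complex of a finite subset of $\mathbb{R}^d$ has exactly one infinite bar, occurring in degree $0$ with birth time $0$ (equivalently, that the union of balls is connected and eventually contractible, and that no higher-degree class persists to infinity), and one must handle the $e^{-b_{k,i}t}$ term for an infinite bar as $e^{-\infty}=0$ consistently with \cite{GH21}. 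Everything else is a routine limit computation on a finite sum of exponentials.
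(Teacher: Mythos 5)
Your overall structure is sound, and for the two limit statements it is in fact more detailed than the paper's proof, which simply invokes Proposition 5.10 of \cite{GH21} together with ``properties of the alpha complex''. Your direct bookkeeping --- $\#X$ degree-zero bars born at $\epsilon=0$, exactly one of them infinite, all other bars either having positive birth or finite death, so that the sum tends to $\#X$ as $t\to\infty$ and to $1$ as $t\to 0$ --- is a correct, self-contained substitute for that citation, provided you also supply the justification you yourself flag: that the filtration starts as $\#X$ isolated vertices and ends with a single component and no higher homology. Both facts follow from the nerve lemma, since for large $\epsilon$ the union $\bigcup_{x\in X}B_\epsilon(x)$ is connected and the alpha complex stabilises to the (contractible) Delaunay complex, so there is exactly one infinite bar and it sits in degree $0$ with birth $0$.

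There is, however, one genuine gap, in your argument for the truncation of the sum in (1): you bound the \emph{dimension} of the alpha complex by $d$ using the general-position statement from \cite[III.3]{EH10}, but the proposition is asserted for an arbitrary finite subset of $\mathbb{R}^d$, and for non-generic configurations (e.g.\ four cocircular points in the plane) the alpha complex can contain simplices of dimension greater than $d$, so the claim ``$m_k=0$ for $k>d$'' is not justified as you state it. The paper avoids this by arguing at the level of homology rather than of simplices: by the nerve lemma $\Alpha(X)_\epsilon \simeq \bigcup_{x\in X}B_\epsilon(x)$, a union of balls in $\mathbb{R}^d$, and such a space has vanishing homology in all degrees $k>d$ for every $\epsilon$, with no genericity hypothesis; hence the barcode is empty in degrees above $d$ and the sum truncates. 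Replacing your dimension count by this homological vanishing (a fact you already use implicitly in the rest of your argument) closes the gap; the rescaling observation $\Alpha(tX)_\epsilon=\Alpha(X)_{\epsilon/t}$ and the remaining limit computations are fine.
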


\begin{proof}
For (1), by the nerve lemma $\Alpha(X)_\epsilon\simeq \cup_{x\in X}B_\epsilon(x)$, and therefore $H_k(\Alpha(X)_\epsilon)\cong 0$ for all $k>d$ and all $\epsilon\in \mathbb{R}_{\geq 0}$.
For (2) and (3), this follows from the properties of the alpha complex, and Proposition 5.10 in \cite{GH21}.
\end{proof}

\begin{example}\label{E:alpha mag finite p}
	We consider the alpha magnitude of a finite set of points $X=\{x_0,x_1,\dots,x_n\}$ in $\RR$. We assume that $x_0\leq x_1 \leq \ldots \leq x_n$. Then the alpha complex on $X$ will have non-trivial homology only in dimension $0$. To see what the expression for alpha magnitude would be in in this case, consider two adjacent points $x_i$, $x_{i+1}\subset X$. Then when $\epsilon\geq d(x_i,x_{i+1})/2$ we have that $\sigma=\{x_i,x_{i+1}\}\in \alpha(X)_\epsilon$. One can easily see that this holds for the remaining points, and thus, 
	$$ |X|_\alpha=n+1-\sum\limits_{k=0}^{n-1}e^{-\frac{d(x_k,x_{k+1})}{2}}.$$
	\end{example}

We  note the following relationship between alpha magnitude and Rips magnitude of finite subsets of the real line.
	
\begin{lemma}\label{Prop:RipsAlphaFinRelation}
    Let $A\subset \RR$, $A$ finite. Then
    \[ |A|_\alpha=\left|\begin{frac}{1}{2}\end{frac}A\right|_{\mathrm{Rips}} \,.\]
\end{lemma}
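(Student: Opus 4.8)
The plan is to compute both sides explicitly in terms of the consecutive gaps $d(x_k,x_{k+1})$ and check that they agree. Write $A=\{x_0<x_1<\dots<x_n\}\subset\RR$. By Example~\ref{E:alpha mag finite p} the left-hand side is already available:
\[
|A|_\alpha=(n+1)-\sum_{k=0}^{n-1}e^{-d(x_k,x_{k+1})/2}.
\]
So it remains to show that the Rips magnitude of the scaled space $\tfrac12 A=\{\tfrac12 x_0<\dots<\tfrac12 x_n\}$, whose consecutive gaps are $\tfrac12 d(x_k,x_{k+1})$, equals the same expression. Since Rips magnitude is the alternating sum $\sum_k\sum_i(-1)^k(e^{-a_{k,i}}-e^{-b_{k,i}})$ over the barcode of the Vietoris--Rips persistence module, I need two ingredients: (i) that $\VR(\tfrac12 A)_\epsilon$ has no homology in positive degrees for every $\epsilon$, so only the degree-$0$ barcode contributes; and (ii) an explicit description of that degree-$0$ barcode.

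For (i) I would prove the auxiliary statement: for any finite $Y\subset\RR$ and any $\epsilon\ge 0$, the complex $\VR(Y)_\epsilon$ is homotopy equivalent to a discrete set of points. First, $\VR(Y)_\epsilon$ is the disjoint union of the Vietoris--Rips complexes of its maximal ``runs'' — maximal blocks of consecutive points of $Y$ all of whose successive gaps are $\le\epsilon$ — since two points in different runs are separated by a gap exceeding $\epsilon$, hence lie at distance $>\epsilon$ and span no edge. It then suffices to show that if $Y=\{y_1<\dots<y_m\}$ has all successive gaps $\le\epsilon$ then $\VR(Y)_\epsilon$ is contractible. This I would do by induction on $m$, peeling off the rightmost vertex $y_m$: setting $Y'=\{y_i\in Y:\, y_m-y_i\le\epsilon\}$ (a nonempty block of consecutive points, as it contains $y_{m-1}$), one checks that the link of $y_m$ in $\VR(Y)_\epsilon$ is exactly $\VR(Y')_\epsilon$, and that $\VR(Y)_\epsilon$ is the union of the closed star of $y_m$ (a cone, hence contractible) with $\VR(Y\setminus\{y_m\})_\epsilon$, their intersection being that link; the inductive hypothesis applies to $Y'$ and to $Y\setminus\{y_m\}$ (both still have successive gaps $\le\epsilon$), and the gluing lemma for a union of two contractible complexes along a contractible subcomplex closes the induction. (Alternatively, one may observe that $\VR(Y)_\epsilon$ is the flag complex of an interval graph, which is chordal, and that flag complexes of chordal graphs are collapsible.) In particular $H_k(\VR(Y)_\epsilon)=0$ for all $k\ge 1$.

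For (ii), two consecutive points $\tfrac12 x_k,\tfrac12 x_{k+1}$ of $\tfrac12 A$ lie in the same connected component of $\VR(\tfrac12 A)_\epsilon$ if and only if $\tfrac12 d(x_k,x_{k+1})\le\epsilon$: if the gap is $\le\epsilon$ the joining edge is present, while conversely the first step of any edge-path that leaves $(-\infty,\tfrac12 x_k]$ must land at or beyond $\tfrac12 x_{k+1}$ and so has length $\ge\tfrac12 d(x_k,x_{k+1})$. Hence all $n+1$ components are born at $\epsilon=0$, and one merge (one death) occurs precisely at each value $\tfrac12 d(x_k,x_{k+1})$, $k=0,\dots,n-1$, counted with multiplicity; so the degree-$0$ barcode of $\VR(\tfrac12 A)$ is one infinite bar $[0,\infty)$ together with the finite bars $[0,\tfrac12 d(x_k,x_{k+1}))$, $k=0,\dots,n-1$, with no bars in positive degrees. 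Plugging this into the defining alternating sum (the infinite bar contributing $e^{0}-e^{-\infty}=1$) gives
\[
\left|\tfrac12 A\right|_{\mathrm{Rips}}=1+\sum_{k=0}^{n-1}\Bigl(1-e^{-d(x_k,x_{k+1})/2}\Bigr)=(n+1)-\sum_{k=0}^{n-1}e^{-d(x_k,x_{k+1})/2}=|A|_\alpha,
\]
which is the claim. I expect the only genuine obstacle to be part (i): the rest is elementary bookkeeping in one real variable, but the vanishing of higher Vietoris--Rips homology for point sets on the line — equivalently, contractibility of each run — is the one point requiring an actual topological argument, via either the star/link induction above or the chordal-graph fact.
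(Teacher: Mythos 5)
Your proof is correct, but it takes a more self-contained route than the paper. The paper's proof is a two-line citation argument: it quotes Proposition 7.6 of Govc--Hepworth, which already states $|tA|_{\mathrm{Rips}}=n+1-\sum_{j=0}^{n-1}e^{-d(x_{j+1},x_j)t}$ for finite subsets of $\RR$, sets $t=\tfrac12$, and compares with the formula for $|A|_\alpha$ from Example~\ref{E:alpha mag finite p}. You instead re-derive that Rips formula from scratch: you show $\VR(Y)_\epsilon$ splits as a disjoint union over maximal runs, prove each run is contractible by the star/link induction (so only degree-$0$ bars contribute), and read off the degree-$0$ barcode as one infinite bar plus bars $[0,\tfrac12 d(x_k,x_{k+1}))$. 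This is exactly the content of the cited result, so your argument buys independence from \cite{GH21} at the cost of length; the induction is even slightly more than needed, since the link of the rightmost vertex $y_m$ is the full simplex on its neighbours (any two points of $[y_m-\epsilon,y_m)$ are within $\epsilon$ of each other), so it is contractible without invoking the inductive hypothesis, and for the magnitude computation Mayer--Vietoris vanishing of $H_k$, $k\geq 1$, would suffice in place of contractibility. One small slip: your $Y'=\{y_i\in Y:\,y_m-y_i\le\epsilon\}$ contains $y_m$ itself, whereas the link of $y_m$ is the complex on $Y'\setminus\{y_m\}$; this does not affect the argument.
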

\begin{proof}
   Let $A=\{x_0,\dots , x_n\}\subset \mathbb{R}$.
    According to Proposition 7.6 in \cite{GH21}, 
    \[ |tA|_{\mathrm{Rips}}=n+1-\sum_{j=0}^{n-1} e^{-d(x_{j+1},x_j)t}\,. 
    \] 
     Our expression in  Example \ref{E:alpha mag finite p} for $|A|_\alpha$ implies the result.

\end{proof}	

\subsection{Alpha magnitude of compact subspaces of Euclidean space}

We have so far only considered alpha magnitude for finite sets of points in $\RR^n$. We now propose the following as an extension to compact subsets of $\RR^n$.

\begin{definition}
Let $X\subset\RR^n$ be a compact metric subspace. Then if the following limit exists for all sequences of finite subsets of $A$ converging to $X$ with respect to the Hausdorff metric, we define it to be the \define{alpha magnitude} of the space $X$:

$$|X|_\alpha=\lim\limits_{\#(A)<\infty,\,A\subset X}|A|_\alpha.$$
\end{definition}

\begin{remark} 
In \cite{GH21} Rips magnitude for compact spaces is prospectively defined in two ways. First, in terms of a `lower' and an `upper' definition replacing the limit above with a limit infimum and a limit supremum respectively, as well as a definition akin to the one we propose above, where the lower and upper definitions agree. We choose to employ the most restrictive version here. 
Further, as we show below in Proposition \ref{circlealpha}, in all the examples that we consider where alpha magnitude exists, the upper and lower version of alpha magnitude agree. We leave for future work to study the relationship between the upper and lower definitions. 

\end{remark}

We next observe a relationship between the alpha magnitude and Rips magnitude of an interval of the real line.

\begin{lemma}\label{unitint}
    Let $B=[a, b]\subset\RR$ be an interval in $\RR$. Then 
    \[
    |B|_{\alpha}=\left|\frac{1}{2}B \right|_{\mathrm{Rips}}\, , 
    \]  
    and in particular
    \[ 
    |B|_\alpha=1+\frac{b-a}{2}\, . 
    \]
\end{lemma}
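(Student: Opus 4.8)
The plan is to reduce the interval case to the finite case via the defining limit, using the explicit formula from Example~\ref{E:alpha mag finite p} together with the known Rips formula for intervals. First I would fix $B=[a,b]$ and take an arbitrary sequence of finite subsets $A_j\subset B$ with $A_j\to B$ in the Hausdorff metric; writing $A_j=\{x_0^{(j)}\le\dots\le x_{n_j}^{(j)}\}$, Example~\ref{E:alpha mag finite p} gives
\[
|A_j|_\alpha = (n_j+1) - \sum_{k=0}^{n_j-1} e^{-d(x_k^{(j)},x_{k+1}^{(j)})/2}
= 1 + \sum_{k=0}^{n_j-1}\left(1 - e^{-d(x_k^{(j)},x_{k+1}^{(j)})/2}\right).
\]
The first key step is to observe that this last sum is a Riemann-type sum: for small gaps $\delta_k:=d(x_k^{(j)},x_{k+1}^{(j)})$ one has $1-e^{-\delta_k/2}=\tfrac{\delta_k}{2}+O(\delta_k^2)$, and since the gaps sum to at most $b-a$ while $\max_k\delta_k\to 0$ as $A_j\to B$ (Hausdorff convergence forces the mesh to $0$, at least after noting $a,b$ must be approximated), the error terms are bounded by $\tfrac12\max_k\delta_k\sum_k\delta_k\le\tfrac12(b-a)\max_k\delta_k\to 0$. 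Hence $\sum_k(1-e^{-\delta_k/2})\to\tfrac{b-a}{2}$, giving $|A_j|_\alpha\to 1+\tfrac{b-a}{2}$ independently of the chosen sequence, which is exactly the definition of $|B|_\alpha$ and yields the second displayed equation.

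The second key step is the identification $|B|_\alpha=\big|\tfrac12 B\big|_{\mathrm{Rips}}$. For this I would either invoke Lemma~\ref{Prop:RipsAlphaFinRelation} at the level of the approximating finite sets $A_j$ (so that $|A_j|_\alpha=|\tfrac12 A_j|_{\mathrm{Rips}}$, and $\tfrac12 A_j\to\tfrac12 B$ in the Hausdorff metric, passing the identity to the limit once one knows the Rips magnitude of $\tfrac12 B$ is defined as the corresponding limit), or alternatively quote the Rips formula for an interval from \cite{GH21}, namely $|tB|_{\mathrm{Rips}}=1+\tfrac{(b-a)t}{2}$ — wait, this should be cross-checked against \cite{GH21}, since rescaling $B$ by $t$ scales the length by $t$ — and then substitute $t=\tfrac12$ to get $\big|\tfrac12 B\big|_{\mathrm{Rips}}=1+\tfrac{b-a}{2}=|B|_\alpha$. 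Either route closes the argument.

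The main obstacle I anticipate is the interchange of limits in the second step: Lemma~\ref{Prop:RipsAlphaFinRelation} is stated only for finite sets, so transferring the identity $|A_j|_\alpha=|\tfrac12 A_j|_{\mathrm{Rips}}$ to compact spaces requires knowing that $\big|\tfrac12 B\big|_{\mathrm{Rips}}$ exists as a Hausdorff limit and equals $\lim_j|\tfrac12 A_j|_{\mathrm{Rips}}$ — this is really the content of the extension of Rips magnitude to compact sets, and one must be careful that the sequence $\tfrac12 A_j$ is an \emph{admissible} approximating sequence for $\tfrac12 B$ (it is, since Hausdorff convergence is preserved under the isometry-like scaling). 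A cleaner path that avoids this issue entirely is to prove the explicit value $1+\tfrac{b-a}{2}$ directly as in the first step, and separately prove the same explicit value for $\big|\tfrac12 B\big|_{\mathrm{Rips}}$ using the interval computation in \cite{GH21}; the two equalities then give the first display for free. I would present the proof in that order: establish the explicit formula via the Riemann-sum estimate, then match it with the \cite{GH21} Rips computation.
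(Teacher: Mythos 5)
Your proposal is correct, but your preferred route is genuinely different from the paper's. The paper proves the identity $|B|_\alpha=\left|\tfrac12 B\right|_{\mathrm{Rips}}$ first and structurally: it applies Lemma~\ref{Prop:RipsAlphaFinRelation} to each finite $A\subset B$, passes to the Hausdorff limit to identify $\lim_{A\to B}\left|\tfrac12 A\right|_{\mathrm{Rips}}$ with $\left|\tfrac12 B\right|_{\mathrm{Rips}}$ (the step you flag as delicate; it is fine here because scaling by $\tfrac12$ halves Hausdorff distances, so $\tfrac12 A_j\to\tfrac12 B$ is an admissible approximating sequence, and \cite[Theorem~9.1]{GH21} guarantees the Rips limit exists for intervals), and only then invokes the formula $|tB|_{\mathrm{Rips}}=1+t(b-a)$ at $t=\tfrac12$ to extract the value. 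You instead compute $|B|_\alpha=1+\tfrac{b-a}{2}$ directly via the Riemann-sum estimate $1-e^{-\delta/2}=\tfrac{\delta}{2}+O(\delta^2)$ over the gaps; this is sound, since Hausdorff convergence does force the mesh to $0$ and the extreme points of $A_j$ to approach $a$ and $b$, so the gap lengths sum to $b-a$ in the limit and the quadratic error is controlled by $(b-a)\max_k\delta_k$. That computation is essentially the $n=1$ case of Proposition~\ref{alphaints} and parallels the GH21 interval argument; you then obtain the first identity by matching two independently computed values. Your route buys independence from the limit-interchange step and from the compact extension of Rips magnitude on the alpha side, at the cost of redoing an interval estimate; the paper's route buys brevity and gives the structural identity $|B|_\alpha=\left|\tfrac12 B\right|_{\mathrm{Rips}}$ without needing the explicit value first. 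One small slip, which you flagged yourself: the Rips interval formula you wrote, $1+\tfrac{(b-a)t}{2}$, carries a spurious factor of $\tfrac12$; the statement of \cite[Theorem~9.1]{GH21} used in the paper is $|t[a,b]|_{\mathrm{Rips}}=1+t(b-a)$, which is consistent with the value $1+\tfrac{b-a}{2}$ at $t=\tfrac12$ that you actually use, so your conclusion is unaffected.
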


\begin{proof}
We note that the alpha magnitude  of $B$ is defined to be the limit of the alpha magnitudes over all finite subsets of $B$. Any finite subset $A\subset [a,b]$ has alpha magnitude equivalent to the Rips magnitude of $\frac{1}{2}A\subseteq \left[\frac{1}{2}a,\frac{1}{2}b\right]$ by Lemma \ref{Prop:RipsAlphaFinRelation}. Hence, we may write 
\[
\left|B \right|_\alpha=\lim_{A\subset B,\#A<\infty}|A|_\alpha=\lim_{A\subset B,\#A<\infty}\left|\frac{1}{2}A\right|_{\mathrm{Rips}}=\left|\frac{1}{2}B \right|_{\mathrm{Rips}} \, .
\]
Theorem 9.1 in \cite{GH21} states that the Rips magnitude of an interval $tB=[ta,tb]$ is 
\[ 
|tB|_{\mathrm{Rips}}=1+t(b-a)\,. 
\]
The second statement follows by setting $t=(1/2)$ 
\[ |B|_\alpha=\left|\frac{1}{2}B \right|_{\mathrm{Rips}}=1+\frac{b-a}{2}\,.  
\]

\end{proof}

We now observe a monotonicity property for the alpha magnitude of subsets of $\RR$.

\begin{proposition}\label{monotonicity}
    Let $A\subseteq B\subset\RR$, $A,B$ finite. Then $|A|_\alpha\leq|B|_\alpha$. Further, if $A\subset L$, $L$ an interval of finite length $l$, then $|A|_\alpha\leq 1+\frac{l}{2}$.
\end{proposition}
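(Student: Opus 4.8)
The plan is to use the explicit formula for the alpha magnitude of a finite subset of $\RR$ from Example \ref{E:alpha mag finite p} together with a coupling argument that compares the nearest-neighbour gaps of $A$ and $B$. Writing $A = \{x_0 \le \dots \le x_n\}$ and $B = \{y_0 \le \dots \le y_m\}$ with $A \subseteq B$, Example \ref{E:alpha mag finite p} gives
\[
|A|_\alpha = (n+1) - \sum_{k=0}^{n-1} e^{-d(x_k, x_{k+1})/2}, \qquad |B|_\alpha = (m+1) - \sum_{j=0}^{m-1} e^{-d(y_j, y_{j+1})/2}.
\]
The key observation is that each consecutive gap $[x_k, x_{k+1}]$ of $A$ is subdivided by the points of $B$ that fall in it into $r_k \ge 1$ consecutive sub-gaps $g_{k,1}, \dots, g_{k,r_k}$ with $\sum_\ell g_{k,\ell} = d(x_k, x_{k+1})$, and that $\sum_k r_k = m$. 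So it suffices to show, for each $k$, the elementary inequality
\[
r_k - \sum_{\ell=1}^{r_k} e^{-g_{k,\ell}/2} \;\ge\; 1 - e^{-\left(\sum_\ell g_{k,\ell}\right)/2},
\]
and then sum over $k$. This reduces to the two-variable fact that $1 - e^{-s/2} - e^{-t/2} + e^{-(s+t)/2} = (1 - e^{-s/2})(1 - e^{-t/2}) \ge 0$ for $s, t \ge 0$, iterated $r_k - 1$ times; that is, inserting an extra point into any gap can only increase alpha magnitude. This proves $|A|_\alpha \le |B|_\alpha$.

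For the second statement, suppose $A \subset L$ with $L$ an interval of length $l$. One approach is to note that $A \cup \{$endpoints of $L\}$ is a finite subset of $L$ containing $A$, so by the monotonicity just established $|A|_\alpha \le |A \cup \partial L|_\alpha$; and $A \cup \partial L$ is a finite subset of the closed interval $L$, so $|A \cup \partial L|_\alpha \le |L|_\alpha = 1 + l/2$ by the definition of alpha magnitude for compact sets as a supremum-type limit over finite subsets — more carefully, by Lemma \ref{unitint} combined with the fact (already packaged in the monotonicity of the first part) that the alpha magnitudes of finite subsets of $L$ are bounded above by $1 + l/2$. Alternatively, and perhaps cleanly, one can argue directly: for $A = \{x_0 \le \dots \le x_n\} \subset L$, the gaps satisfy $\sum_{k=0}^{n-1} d(x_k, x_{k+1}) \le l$, and since $1 - e^{-g/2} \le g/2$ for all $g \ge 0$,
\[
|A|_\alpha = 1 + \sum_{k=0}^{n-1}\bigl(1 - e^{-d(x_k,x_{k+1})/2}\bigr) \le 1 + \sum_{k=0}^{n-1} \frac{d(x_k, x_{k+1})}{2} \le 1 + \frac{l}{2}.
\]

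I expect the only mild obstacle to be bookkeeping: making the subdivision-of-gaps indexing precise (handling the case $r_k = 1$, i.e.\ a gap of $A$ containing no interior point of $B$, and verifying $\sum_k r_k = m$), and making sure the telescoping/factoring $1 - e^{-s/2} - e^{-t/2} + e^{-(s+t)/2} = (1-e^{-s/2})(1-e^{-t/2})$ is invoked in the right order when a gap is split into more than two pieces. The direct estimate via $1 - e^{-g/2} \le g/2$ for the second claim is genuinely routine and sidesteps any appeal to the compact-set definition, so I would likely present that version to keep the proof self-contained.
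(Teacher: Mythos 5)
Your proof is correct, but it takes a genuinely different route from the paper's. For the first inequality the paper translates to Rips magnitude via Lemma \ref{Prop:RipsAlphaFinRelation} and then quotes the monotonicity of Rips magnitude for finite subsets of $\RR$ from Theorem 9.1 of Govc--Hepworth, whereas you prove monotonicity from scratch: starting from the gap formula of Example \ref{E:alpha mag finite p}, you observe that splitting a gap of length $s+t$ into gaps of lengths $s$ and $t$ changes the magnitude by $(1-e^{-s/2})(1-e^{-t/2})\geq 0$, and iterate. This makes your argument self-contained where the paper leans on an external result. One bookkeeping remark: your claim $\sum_k r_k=m$ fails when $B$ contains points outside $[\min A,\max A]$, but this is harmless --- the extra gaps of $B$ contribute non-negative terms $1-e^{-g/2}\geq 0$ and can simply be discarded, so summing your per-gap inequality over the gaps of $A$ still yields $|A|_\alpha\leq|B|_\alpha$. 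For the second claim, the paper invokes the compact-set definition together with Lemma \ref{unitint}, bounding $|A|_\alpha$ by the limit of alpha magnitudes over finite subsets of $L$; your direct estimate, using $1-e^{-g/2}\leq g/2$ and the fact that the gaps of $A$ sum to at most $l$, is cleaner, avoids the compact-set machinery altogether, and is the version worth keeping.
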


\begin{proof}
    Lemma \ref{unitint} gives an expression for alpha magnitude in terms of Rips magnitude for intervals. Theorem 9.1 in \cite{GH21} states that for finite $A\subseteq B\subset\RR$, 
    \[ 
    |A|_{\mathrm{Rips}}\leq|B|_{\mathrm{Rips}}\,. 
    \]
    Then 
    \[
    |A|_\alpha=\left|\frac{1}{2}A \right|_{\mathrm{Rips}}\leq \left|\frac{1}{2}B \right|_{\mathrm{Rips}}=|B|_\alpha\, .
    \] 
    The second conclusion follows since 
    \[
    |A|_{\alpha}=\lim_{A'\subset A,\#A'<\infty}|A'|_{\alpha}\leq\lim_{A'\subset L,\#A'<\infty}|A'|_{\alpha}=1+\frac{l}{2} \, .
    \]
\end{proof}

We now prove the following result for the alpha magnitude of a finite union of intervals. This argument follows closely from the proof of \cite[Theorem~9.1]{GH21}.

\begin{proposition}
\label{alphaints}
Let $A\subset [0,1/2]$ be a finite union of $n\in\NN$ disjoint intervals, $A=\bigcup_{i=1}^nA_i$, where $A_i=(a_{i,1},a_{i,2})$ and $a_{1,1}< a_{1,2}< a_{2,1} < a_{2,2} < \ldots < a_{n,1}< a_{n,2}$. Let $g_i=a_{i+1,1}-a_{i,2}$ for $1\leq i\leq n-1$ and let $l_i=a_{i,2}-a_{i, 1}$ for $1\leq i\leq n$. Then 
$$|tA|_\alpha=1+\sum_{i=1}^n tl_i/2+\sum_{j=1}^{n-1}(1-e^{-tg_j/2}).$$
\end{proposition}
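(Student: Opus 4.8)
The plan is to evaluate $|tA'|_\alpha$ for a finite subset $A'\subset A$ using the closed formula of Example~\ref{E:alpha mag finite p}, and then let $A'$ run over a sequence of finite subsets becoming dense in the compact set $\overline{A}:=\bigcup_{i=1}^n[a_{i,1},a_{i,2}]$ (equivalently, converging to $A$ in the Hausdorff metric), showing that the right-hand side of the statement is the common value of the resulting limit; in particular this shows the limit defining $|tA|_\alpha$ exists. This is the same scheme as the proof of \cite[Theorem~9.1]{GH21} for a single interval, the only new feature being the bookkeeping of the gaps between the intervals.

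First I would fix a small $\delta>0$ (small compared with the gap lengths $g_j$ and with the interval lengths $l_i$) and take a finite $A'\subset A$ that is $\delta$-dense in $\overline A$. Writing $A'\cap A_i=\{y^i_0<\dots<y^i_{p_i}\}$, which is nonempty once $\delta<\min_i l_i/2$ (otherwise the midpoint of $A_i$ would lie too far from $A'$), I would split the consecutive pairs of points of $A'$ into the $\sum_i p_i$ \emph{internal} pairs $(y^i_k,y^i_{k+1})$ and the $n-1$ \emph{bridging} pairs $(y^i_{p_i},y^{i+1}_0)$; together these are all $\#A'-1=n-1+\sum_i p_i$ pairs. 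The key geometric point is that for an internal pair, every point of $A'$ lying outside $A_i$ is at distance $>\min_j g_j>\delta$ from its midpoint, so $\delta$-density forces $y^i_{k+1}-y^i_k\le 2\delta$; applying $\delta$-density at $a_{i,1}$ and $a_{i,2}$ similarly gives $y^i_0-a_{i,1}\le\delta$ and $a_{i,2}-y^i_{p_i}\le\delta$, so that $L_i:=y^i_{p_i}-y^i_0\to l_i$ and the bridging length $d_j:=y^{j+1}_0-y^j_{p_j}\to g_j$ as $\delta\to 0$.

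Next I would substitute into Example~\ref{E:alpha mag finite p} applied to $tA'$, grouping the exponential terms by the internal/bridging split:
\[
|tA'|_\alpha=\Bigl(n+\sum_{i=1}^n p_i\Bigr)-\sum_{i=1}^n\sum_{k=0}^{p_i-1}e^{-t(y^i_{k+1}-y^i_k)/2}-\sum_{j=1}^{n-1}e^{-td_j/2}.
\]
For the internal sums I would linearize: each exponent $s_k:=t(y^i_{k+1}-y^i_k)/2$ is at most $t\delta$, while $\sum_k s_k=tL_i/2$ telescopes, so $0\le e^{-s}-(1-s)\le s^2/2$ yields $\sum_{k=0}^{p_i-1}e^{-s_k}=p_i-\tfrac{tL_i}{2}+\eta_i$ with $0\le\eta_i\le\tfrac12 t\delta\cdot\tfrac{tL_i}{2}\to 0$. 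Hence $|tA'|_\alpha=n+\sum_i\tfrac{tL_i}{2}-\sum_i\eta_i-\sum_j e^{-td_j/2}$, and letting $\delta\to 0$ along the sequence gives
\[
|tA'|_\alpha\ \longrightarrow\ n+\sum_{i=1}^n\frac{tl_i}{2}-\sum_{j=1}^{n-1}e^{-tg_j/2}=1+\sum_{i=1}^n\frac{tl_i}{2}+\sum_{j=1}^{n-1}\bigl(1-e^{-tg_j/2}\bigr),
\]
which is independent of the chosen sequence and hence equals $|tA|_\alpha$.

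I expect the main obstacle to be the uniform bound $y^i_{k+1}-y^i_k\le 2\delta$ on the internal gaps: this is exactly what makes the linearization of $e^{-s}$ legitimate with an aggregate error $\sum_i\eta_i$ that vanishes in the limit, and it genuinely relies on the intervals being separated ($g_j>0$) together with the \emph{uniform} density provided by Hausdorff convergence --- if two intervals could abut, a point of $A'$ in a neighbouring interval might realize the distance from a midpoint and the bound would fail. The remaining ingredients (each $A_i$ meets $A'$, there are exactly $n-1$ bridging pairs, and $y^i_0,y^i_{p_i}$ converge to $a_{i,1},a_{i,2}$) are routine consequences of $\delta$-density and the ordering of the intervals.
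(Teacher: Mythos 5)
Your proof is correct and follows essentially the same route as the paper: both approximate $A$ by a finite subset at Hausdorff distance less than $\delta$ (with $\delta$ small relative to the gaps), split the consecutive differences into within-interval and gap terms, and show the discrepancy from the claimed formula is of order $\delta(t+t^2)$ uniformly on bounded $t$, in the spirit of Govc--Hepworth's Theorem 9.1. The only cosmetic difference is that you bound each exponential termwise via $0\le e^{-s}-(1-s)\le s^2/2$ and telescope, whereas the paper packages the same estimates as bounds on $f(0)$, $f'(0)$ and $f''(0)$ of the difference function $f(t)$.
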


\begin{proof}
Since $A \subset [0,1/2]$, $\sum_{i=1}^{n-1}g_i\leq 1/2$ and $\sum_{i=1}^nl_i\leq 1/2$. Suppose $B$ consisting of $${b_{1,1} \leq \ldots \leq b_{1,m_1} \leq b_{2,1}\leq \ldots \leq b_{n,m_n}}$$ is a finite set of points such that $d_H(B,A)<\delta<\min\limits_{1\leq i\leq n-1}\dfrac{g_i}{2}$. 

Define $f\colon (0,\infty)\to\RR$ by\small \begin{align*} f(t)&=1+\sum_{i=1}^n tl_i/2+\sum_{j=1}^{n-1}(1-e^{-tg_j/2})-|tB|_\alpha\\
&=1+\sum_{i=1}^n t l_i/2+\sum_{j=1}^{n-1}(1-e^{-tg_j/2})-(\sum_{i=1}^n m_i-\sum_{i=1}^n\sum_{j=1}^{m_i-1}e^{(b_{i,j}-b_{i,j+1})t/2}-\sum_{j=1}^{n-1}e^{(b_{j,m_j}-b_{j+1,1})t/2})\\
&=1+\sum_{i=1}^n t l_i/2+n-1-\sum_{j=1}^{n-1}e^{-tg_j/2}-(\sum_{i=1}^n m_i-\sum_{i=1}^n\sum_{j=1}^{m_i-1}e^{(b_{i,j}-b_{i,j+1})t/2}-\sum_{j=1}^{n-1}e^{(b_{j,m_j}-b_{j+1,1})t/2})\\
&=\sum_{i=1}^n t l_i/2+n-\sum_{i=1}^n m_i-\sum_{j=1}^{n-1}e^{-tg_j/2}+\sum_{i=1}^n\sum_{j=1}^{m_i-1}e^{(b_{i,j}-b_{i,j+1})t/2}+\sum_{j=1}^{n-1}e^{(b_{j,m_j}-b_{j+1,1})t/2}\\
&=\sum_{i=1}^n t l_i/2+n-\sum_{i=1}^n m_i+\sum_{i=1}^n\sum_{j=1}^{m_i-1}e^{(b_{i,j}-b_{i,j+1})t/2}+\sum_{j=1}^{n-1}(e^{(b_{j,m_j}-b_{j+1,1})t/2}-e^{-tg_j/2}).\\
\end{align*}
\normalsize
We define $|0B|_\alpha=1$ to extend $f$ to a function on $[0,\infty)$. We first note $f(0)=0$. Then since $d_H(B,A)<\delta$,
\begin{align*}f'(0)&=\sum_{i=1}^n l_i/2-\sum_{i=1}^n(b_{i,m_i}-b_{i,1})/2+\sum_{j=1}^{n-1}(b_{j,m_j}-b_{j+1,1}+g_j)/2\\
&=\sum_{i=1}^nl_i/2-\sum_{i=1}^n(b_{i,m_i}-b_{i,1})/2)+\sum_{i=1}^{n-1}(b_{i,m_i}-b_{i+1,1})/2+\sum_{i=1}^{n-1}g_i/2\\
&=(\sum_{i=1}^nl_i/2+\sum_{i=1}^{n-1}g_i/2)+\sum_{i=1}^{n-1}(b_{i,m_i}-b_{i+1,1})/2-\sum_{i=1}^n(b_{i,m_i}-b_{i,1})/2)\\
&=(a_{n,2}-a_{1,1})/2+(b_{1,1}-b_{n,m_n})/2\\
&=(b_{1,1}-a_{1,1}+a_{n,2}-b_{n,m_n})/2\\&\leq \delta\leq 2n\delta\end{align*}
Then,  \begin{align*}
    f''(0)&=\sum_{i=1}^n\sum_{j=1}^{m_i-1}(b_{i,j}-b_{i,j+1})^2/4+\sum_{j=1}^{n-1}((b_{j,m_j}-b_{j+1,1})^2/4-(g_j)^2/4)\\&\leq \sum_{i=1}^n\sum_{j=1}^{m_i-1}\delta(b_{i,j+1}-b_{i,j})/2+\sum_{j=1}^{n-1}(-b_{j,m_j}+b_{j+1,1}-g_j)(-b_{j,m_j}+b_{j+1,1}+g_j)/4\\&\leq\delta/2+\sum_{j=1}^{n-1}\delta(-b_{j,m_j}+b_{j+1,1}+g_j)/2\\&\leq \delta/2+\delta(2(n-1)\delta+1)/2=\delta+(n-1)\delta^2\leq n\delta.
\end{align*}
The first inequality follows since for each $1\leq i\leq n$ and $1\leq j\leq m_i-1$, we have $b_{i,j+1}-b_{i,j}<2\delta$ since $d_H(B,A)<\delta$. We note that $\sum_{i=1}^n\sum_{j=1}^{m_i-1}(b_{i,j+1}-b_{i,j})\leq\sum_{i=1}^nl_i\leq 1$ and $-b_{j,m_j}+b_{j+1,1}-g_j\leq 2\delta $ hold. Since $d_H(B,A)<\delta$, we conclude the second inequality holds. The third inequality follows since $\delta< \min\limits_{1\leq i\leq n-1}\dfrac{g_i}{2}$ and thus,\[ \sum_{j=1}^{n-1}(-b_{j,m_j}+b_{j+1,1}+g_j)\leq \sum_{j=1}^{n-1}(g_j+2\delta+g_j)\leq 1+2(n-1)\delta.  \] For the last inequality, we note that $\delta<1$ implies $\delta^2<\delta$. It follows that $$0\leq f(t)\leq n\delta(2t+t^2/2).$$ Hence, as $\delta\to 0$, $f\to 0$ uniformly on any bounded subset of $[0,\infty)$, so that $$\lim_{B\to A,\, B \textrm{ finite}}|tB|_\alpha=1+\sum_{i=1}^n tl_i/2+\sum_{j=1}^{n-1}(1-e^{-g_j/2})$$ and so $$|tA|_\alpha=1+\sum_{i=1}^n tl_i/2+\sum_{j=1}^{n-1}e^{-g_j/2}$$ by our definition, as desired.
\end{proof}

\begin{proposition}\label{cantoralpha}
	We consider the middle-thirds Cantor set, $C$, defined as follows:
	\[C=\bigcap\limits_{i=0}^\infty C_i \text{, where } C_i=\bigcup\limits_{k=0}^{2^i-1}\left[\frac{2k}{3^i},\frac{2k+1}{3^i}\right]. \]

We have that the alpha magnitude of the Cantor set exists, and further \[
|C|_\alpha=1+\sum\limits_{j=0}^{\infty}2^{j}\left(1-e^{-(1/2)(1/3)^{j+1}}\right)\, .
\]

\begin{proof}
	We denote by $K_n$ the $n$-th level set of $C$, $\bigcap\limits_{i=0}^n C_i$.
	First, we note that the alpha complex of $C$ has non-trivial homology at most in dimension 0. Suppose $\{H_i\}_{i\in\NN}$ is a sequence of finite subsets of $C$ converging to $C$ in the Hausdorff metric. Then select $H_i$ such that $d_H(H_i,C)<\delta\in (0, 1/3)$. Then for our choice of $\delta$, there is some $n\in\NN$ such that $(1/3)^{n+1}\leq2\delta\leq(1/3)^{n}$. Our method of proof is as follows: \begin{enumerate}
	    \item We establish an upper bound on $|H_i|_\alpha$ with a covering of $C$, hence a covering of $H_i$, so that \ref{monotonicity} implies the bound.
	    \item We establish a lower bound on $|H_i|_\alpha$ through manipulation of the expression for $|H_i|_\alpha$.
	    \item We demonstrate these bounds have the same limit as $\delta\to 0$, and use the Sandwich Theorem \cite[Section 8.1 Sequences, p.\ 557]{JS09} to argue that the alpha magnitude of $\{H_i \}_{i\in\NN}$ has a limit.

	\end{enumerate} We first define the thickening of each set $C_i$ as follows:
	\[C^*_i=\bigcup\limits_{k=0}^{2^i-1}\left[\frac{2k}{3^i}-\delta,\frac{2k+1}{3^i}+\delta\right]. \]
	Next, we observe that the thickenings stabilise for $i>n$. That is, since $2\delta>(1/3)^{n+1}\geq (1/3)^i$, it follows that $\dfrac{2k+1}{3^i}+\delta>\dfrac{2(k+1)}{3^i}-\delta$, the endpoint of the next interval. Hence, for $i>n$, \[C^*_i=[-\delta,1+\delta]=C^*_0. \]
	Let
	\[
	C^*=\bigcap\limits_{i=0}^\infty C^*_i =\bigcap\limits_{i=0}^n C^*_i.
	\]
	Hence we may consider $C^*$ to be a $\delta$ thickening of the $n$-th level set $K_n$ of $C$. 
	Then since $K_n$ has $2^n$ intervals of length $(1/3)^n$, $C^*$ has $2^n$ intervals of length $(1/3)^n+2\delta$. Hence, half the sum of the lengths of intervals in $C^*$ is $\frac{2^{n-1}}{3^n}+2^n\delta$. We now observe that $K_n$ has precisely $2^{j-1}$ gaps of length $(1/3)^j$ for each $1\leq j\leq n$. Hence, $C^*$ has $2^{j-1}$ gaps of length $(1/3)^j-2\delta$ for each $1\leq j\leq n$. By Proposition~\ref{monotonicity}, $$|H_i|_\alpha\leq|C^*|_\alpha=1+\frac{2^{n-1}}{3^n}+2^n\delta+\sum\limits_{j=1}^{n}2^{j-1}(1-e^{-(1/2)(1/3)^j+\delta}), $$ the equality following from \ref{alphaints} and our above characterisation. This completes (1). \\ 
	
	We now proceed to prove (2). First, we note each interval in $C^*$ contains at least one point of $H_i$ since $d_H(H_1,C)<\delta\in\RR$ and $C^*$ is a delta covering of $C$. Since we have a precise characterisation of the gaps in $C^*$, order $C^*=\bigcup_{j=1}^{2^{n}}I_j$, where each $I_j$ is an interval, and where if $c\in I_j$, $c'\in I_k$, and $j<k$, then $c<c'$. Then assign points in $H_i$ the following indices: \[H_i=\{h_{1,1},\dots,h_{1,m_1},h_{2,1},\dots,h_{2^{n},m_{2^{n}}} \}   \] where each $h_{j,k}\in C_j$. Then we write the following expression for $|H_i|_\alpha$: \[|H_i|_\alpha=1+\sum\limits_{j=1}^{2^{n}}\sum\limits_{k=1}^{m_{j}-1}(1-e^{(h_{j,k}-h_{j,k+1})/2})+\sum\limits_{j=1}^{2^{n}-1}(1-e^{(h_{j,m_j}-h_{j+1,1})/2})   \]
	Now, by our above characterisation of $C^*$, each term $h_{j,m_j}-h_{j+1,1}\leq-(1/3)^k+2\delta<0$ for some $k$. Hence, \begin{align*}|H_i|_\alpha&\geq 1+\sum\limits_{j=1}^{2^{n}}\sum\limits_{k=1}^{m_{j}-1}(1-e^{(h_{j,k}-h_{j,k+1})/2})+\sum\limits_{k=1}^{n}2^{k-1}(1-e^{-(1/2)(1/3)^{k}+\delta})\\&> 1+\sum\limits_{k=1}^{n}2^{k-1}(1-e^{-(1/2)(1/3)^{k}+\delta})=L\end{align*} where the second inequality follows through removing a positive term, and we label the final expression $L$ for brevity. This completes (2).\\
	
To prove (3) note that $$|C^*|_\alpha-L=\frac{2^{n-1}}{3^n}+2^n\delta<\frac{2^{n-1}}{3^n}+2^{n-1}\frac{1}{3^{n}}=\frac{2^n}{3^n}.$$ As $\delta\to 0$, $n\to \infty$, hence $|C^*|_\alpha-L\to 0$. But since $|C^*|_\alpha$ and $L$ are upper and lower bounds on $|H_i|_\alpha$, respectively, $|H_i|_\alpha$ must approach the same limit by the Sandwich theorem.
	We now show the above expression converges. We have shown $$\lim\limits_{\delta\to 0}|H_i|_\alpha=\lim\limits_{\delta\to 0}|C^*|_\alpha=1+\sum\limits_{j=0}^{\infty}2^{j}(1-e^{-(1/2)(1/3)^{j+1}}).$$ This converges, since  $$\lim\limits_{k\to\infty}\dfrac{2^{k+1}(1-e^{-(1/2)(1/3)^{k+2}})}{2^k(1-e^{-(1/2)(1/3)^{k+1}})}=\lim\limits_{k\to\infty}2\dfrac{e^{(1/2)(1/3)^{k+2}}-1}{e^{(1/2)(1/3)^{k+2}}-e^{-(1/3)^{k+2}}}=2/3$$ after an application of l'Hopital's rule.  This completes (3).

	\end{proof}

		\end{proposition}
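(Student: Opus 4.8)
The plan is to compute the alpha magnitude of $C$ by approximating it by the level sets $K_n$, whose alpha magnitude is already accessible via Proposition \ref{alphaints}, and then sandwiching the magnitude of an arbitrary finite approximant between two quantities with a common limit. Concretely, given a sequence of finite subsets $\{H_i\}$ converging to $C$ in the Hausdorff metric, I would fix $\delta \in (0,1/3)$ with $d_H(H_i, C) < \delta$ and pick $n$ with $(1/3)^{n+1} \le 2\delta \le (1/3)^n$. The key geometric observation is that a $\delta$-thickening $C^*$ of $C$ (intersecting the thickenings $C_i^*$ of the approximating unions of intervals $C_i$) collapses: for $i > n$ the thickened intervals of $C_i^*$ merge, so $C^* = \bigcap_{i=0}^n C_i^*$ is simply a $\delta$-fattening of the $n$-th level set $K_n$. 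This gives an explicit finite union of $2^n$ intervals, with interval lengths $(1/3)^n + 2\delta$ and gap lengths $(1/3)^j - 2\delta$ (with multiplicity $2^{j-1}$ for $1 \le j \le n$), to which Proposition \ref{alphaints} applies directly.

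From here the argument has three steps. First, the \textbf{upper bound}: since $H_i \subset C \subset C^*$ and $C^*$ is a finite union of intervals, monotonicity (Proposition \ref{monotonicity}) combined with the formula from Proposition \ref{alphaints} gives $|H_i|_\alpha \le |C^*|_\alpha = 1 + \tfrac{2^{n-1}}{3^n} + 2^n\delta + \sum_{j=1}^n 2^{j-1}(1 - e^{-(1/2)(1/3)^j + \delta})$. Second, the \textbf{lower bound}: write out the Example \ref{E:alpha mag finite p}-type expression for $|H_i|_\alpha$, grouping the points of $H_i$ by which interval of $C^*$ they lie in; each interval of $C^*$ contains at least one point of $H_i$ (because $C^*$ is a $\delta$-cover and $d_H(H_i,C)<\delta$), so the "inter-interval" gaps $h_{j,m_j} - h_{j+1,1}$ are bounded above by $-(1/3)^k + 2\delta$, yielding $|H_i|_\alpha \ge 1 + \sum_{k=1}^n 2^{k-1}(1 - e^{-(1/2)(1/3)^k + \delta}) =: L$ after discarding the nonnegative intra-interval terms. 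Third, the \textbf{squeeze}: $|C^*|_\alpha - L = \tfrac{2^{n-1}}{3^n} + 2^n\delta < \tfrac{2^n}{3^n} \to 0$ as $\delta \to 0$ (equivalently $n \to \infty$), so by the sandwich theorem $|H_i|_\alpha$ converges, and its limit equals $\lim_{\delta \to 0} |C^*|_\alpha = 1 + \sum_{j=0}^\infty 2^j(1 - e^{-(1/2)(1/3)^{j+1}})$.

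Finally I would check that this infinite series actually converges, by a ratio test: $\lim_{k\to\infty} \frac{2^{k+1}(1 - e^{-(1/2)(1/3)^{k+2}})}{2^k(1 - e^{-(1/2)(1/3)^{k+1}})} = 2/3 < 1$, using $1 - e^{-x} \sim x$ as $x \to 0$ (or l'Hôpital). Since the limit is the same for every Hausdorff-convergent sequence of finite subsets, the alpha magnitude of $C$ is well-defined and equals the claimed value.

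I expect the main obstacle to be the lower bound in step two: one must argue carefully that every interval of the thickened set $C^*$ is "hit" by a point of $H_i$ and that the surviving inter-interval gap exponents are controlled by the gap structure of $K_n$ rather than by the finer gap structure of $C$ itself. The bookkeeping of indices (which point of $H_i$ sits in which of the $2^n$ intervals, and matching the $2^{j-1}$ gaps of size $(1/3)^j$) is where the proof is most delicate; the upper bound and the squeeze are comparatively routine once Proposition \ref{alphaints} is in hand.
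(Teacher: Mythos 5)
Your proposal follows essentially the same route as the paper's own proof: the same choice of $\delta$ and $n$, the same stabilising $\delta$-thickening $C^*$ of the level set $K_n$, the upper bound via Proposition \ref{monotonicity} together with Proposition \ref{alphaints}, the lower bound by grouping points of $H_i$ into the $2^n$ intervals of $C^*$ and discarding the intra-interval terms, the squeeze with $|C^*|_\alpha - L < 2^n/3^n \to 0$, and the ratio-test check of convergence of the limiting series. No substantive differences to report.
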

	
	We now turn to the example of the circle considered in Euclidean space with the standard Euclidean metric. Govc and Hepworth \cite{GH21} demonstrate that the Rips magnitude cannot be said to exist in the sense we define above as the limit of Euclidean cycles inscribed in the circle does not exist. Here we demonstrate that the limit of such cycles does in fact exist for alpha magnitude.

\begin{lemma}\label{chordcont}
Let $\{P_n\}_{n\in\NN}$ denote a sequence of finite subsets of $\SSS^1$ converging to $\SSS^1$ in the Hausdorff metric. Let $P_i=\{p_1,\dots,p_{n_i}\}$ be some element of this sequence with the points listed in clockwise order. Define 
\[
M(P_i)=\max\{d(p_0, p_1), d(p_1, p_2), \ldots, d(p_{n_i-1},p_0 )\},\] where $d$ is the distance function inherited from $\RR^2$. Then as $\{P_n\}_{n\in\NN}$ converges to $\SSS^1$ in the Hausdorff metric, $M(P_n)$ converges to $0$. 
    
\end{lemma}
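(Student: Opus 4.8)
The plan is to show that the mesh $M(P_n)$ of the inscribed point set is controlled by the Hausdorff distance $d_H(P_n, \SSS^1)$, and then let $n\to\infty$. The key observation is that if $d_H(P_n,\SSS^1)=\delta_n$, then every point of $\SSS^1$ lies within distance $\delta_n$ of some point of $P_n$; in particular, the arc of $\SSS^1$ between two cyclically-consecutive points $p_k, p_{k+1}$ of $P_n$ cannot be too long, since its midpoint (in arc length) must be within $\delta_n$ of a point of $P_n$, and that point can only be $p_k$ or $p_{k+1}$.

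First I would set up the geometry: parametrise $\SSS^1$ by arc length, list the points $p_1,\dots,p_{n_i}$ of $P_i$ in clockwise (cyclic) order, and consider the open arc $\gamma_k$ strictly between $p_k$ and $p_{k+1}$ (indices mod $n_i$). I claim the arc-length $|\gamma_k|$ satisfies $|\gamma_k|\le 2\cdot 2\arcsin(\delta_n)$ or, more simply, that the chord $d(p_k,p_{k+1})\le 4\delta_n$. Indeed, pick the arc-midpoint $q$ of $\gamma_k$; since $d_H(P_n,\SSS^1)=\delta_n$, there is some $p_j\in P_n$ with $d(q,p_j)\le\delta_n$. Because $q$ lies on the open arc between consecutive points $p_k$ and $p_{k+1}$ and no other point of $P_n$ lies on that arc, the nearest point $p_j$ must be $p_k$ or $p_{k+1}$ (one checks that any other point of $P_n$ is separated from $q$ by at least one of $p_k,p_{k+1}$ along the circle, hence is strictly farther in Euclidean distance — this uses that on a circle, Euclidean distance is a monotone function of arc-length distance up to arc-length $\pi$). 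Hence the arc from $p_k$ to $q$ and from $q$ to $p_{k+1}$ each have an endpoint within $\delta_n$ (in chord distance, hence within a comparable bound in arc-length) of $q$, so $d(p_k,p_{k+1})\le d(p_k,q)+d(q,p_{k+1})\le 2\delta_n$, giving $M(P_n)\le 2\delta_n$.

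Then I would conclude: since $\{P_n\}$ converges to $\SSS^1$ in the Hausdorff metric, $\delta_n=d_H(P_n,\SSS^1)\to 0$, and therefore $M(P_n)\le 2\delta_n\to 0$.

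The main obstacle is the careful justification that the point of $P_n$ realising the near-approximation of the arc-midpoint $q$ is necessarily one of the two neighbours $p_k,p_{k+1}$, rather than some point on the far side of the circle that happens to be Euclidean-close. This requires either restricting attention to the regime where $\delta_n$ is small enough that $M(P_n)<\pi$ (true eventually, since the $P_n$ eventually become $\delta$-dense), so that Euclidean distance and arc-length distance are monotonically related, or else arguing directly with chords. A clean way to sidestep subtleties is to work entirely with arc-length: the arcs $\gamma_k$ partition $\SSS^1$ (up to the finitely many points of $P_n$), $\delta$-density in the Hausdorff/Euclidean sense implies $\delta'$-density in arc-length for a comparable $\delta'\to 0$, and a $\delta'$-dense subset of $\SSS^1$ cannot leave an arc-gap longer than $2\delta'$; then pass back from arc-length to chord length by $d(p_k,p_{k+1})\le |\gamma_k|$. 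Either route makes the estimate routine once the monotonicity relationship between the two metrics on $\SSS^1$ is invoked.
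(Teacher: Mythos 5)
Your argument is correct and is essentially the same as the paper's: both proofs bound the largest consecutive chord by (about) twice the Hausdorff distance by looking at the midpoint of the widest gap arc, the paper via the sagitta/Pythagoras relation giving $M(P_i)=\delta\sqrt{4-\delta^2}\le 2\delta$, and you via the triangle inequality together with chord--arc monotonicity. One small point to state explicitly: the density hypothesis only guarantees that \emph{one} of $p_k,p_{k+1}$ lies within $\delta_n$ of the chosen point $q$, and it is precisely because $q$ is the arc midpoint that $d(q,p_k)=d(q,p_{k+1})$, so both half-chords are at most $\delta_n$ and the bound $M(P_n)\le 2\delta_n$ follows.
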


\begin{figure}[h!]
\begin{center}
    \begin{tikzpicture}[scale=0.7][>=latex]
  \begin{axis}[xmin=-1.1, xmax=0, ymin=0, ymax=1.1,
                axis x line = bottom,
                axis y line = right,
               axis equal, xlabel style={right}, ylabel style={above},
               xtick={-1,-.5,0}, ytick={0,.5,1}, xlabel={$x$}, ylabel={$y$},
               yticklabels={0,.5,1},
               xticklabels={-1,-.5,0}]
    \draw[black,thick] (110,0) circle (52mm);
    \addplot[thick, dashed, color=red, samples=100, domain=-.9505:-.1837] {.87676*(x+.9505)+.3107};
    \addplot[thick, dashed, color=red, samples=100, domain=-.9505:-.6592] {1.5149*(x+.9505)+.3107};
    \addplot[thick, dashed, color=red, samples=100, domain=-.6592:-.5671] {-1.13937*(x+.6592)+.7519};
    \addplot[only marks, mark=*, red] coordinates {(-.1837,.9830)};
    \addplot[only marks, mark=*, red] coordinates {(-.9505,.3107)};
    \node[red] at (5, 30) {$p_i$};
    \node[red] at (85, 105) {$p_{i+1}$};
    \node[red] at (65, 60) {$M(P_i)$};
    \node[red] at (52, 72) {$s$};
    \node[red] at (37,59) {$\delta$};
    \end{axis}
  \end{tikzpicture}
  \caption{$M(P_i)$, the sagitta $s$, and $\delta$.}
\end{center}
\end{figure}
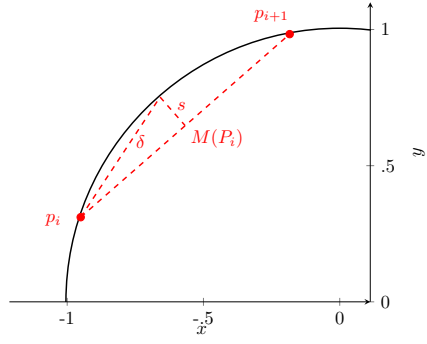

\begin{proof}
Let $\epsilon>0$. We will show that there exists some $N$ such that for $i>N$, $M(P_i)<\epsilon$. Since $\{P_n\}_{n\in\NN}$ converges to $\SSS^1$ in the Hausdorff metric, there exists some $N$ such that for $i>N$, $d(P_i,\SSS^1)<\delta$. Hence, let $0<\delta<\frac{\epsilon}{2}$. $M(P_i)$ represents the greatest chord length between points of $P_i$ on $\SSS^1$, so it is realized as a chord between two consecutive points of $P_i$ in the clockwise order. Then $d_H(P_i,\SSS^1)$ will be represented by the hypotenuse of the right triangle constructed from $M(P_i)/2$ and the sagitta $s$.

We use the formula for chord length in terms of the sagitta and with radius $1$ to write the expression $$M(P_i)=2\sqrt{2s-s^2} $$ and combine this with the basic pythagorean observation $$\frac{M(P_i)^2}{4}+s^2=\delta^2.$$ From these two equations we can express  $M(P_i)$ as a function of $\delta$
\begin{equation}\label{chordal distance}
    M(P_i)=\delta\sqrt{4-\delta^2}
\end{equation} Then, \begin{align*}
    M(P_i)&\leq 2\delta\leq \epsilon
\end{align*} where the first inequality follows since $\delta>0$ and the second by our assumption $\delta<\epsilon/2$. Thus, we have shown $M(P_n)\to 0$ as $d_H(P_n,\SSS^1)\to 0$. \vspace{1.5mm}

\end{proof}	
	
	We make use of the following statement from \cite[Theorem 22.6]{lay2007convex}.
	
	\begin{lemma}\label{percont}
	    For any Hausdorff converging sequence $K_n\to K_\infty$ of convex, compact sets
	    \[
	    \textrm{perim} K_\infty=\lim_{n\to\infty} \textrm{perim} K_n.
	    \]
	    With other words, the perimeter function is continuous with respect to the Hausdorff distance on the space of compact, convex sets.
	\end{lemma}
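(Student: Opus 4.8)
The plan is to pass from the sets to their \emph{support functions}, which converts the Hausdorff metric into the supremum metric and turns the perimeter into a bounded linear functional. For a compact convex set $K\subset\RR^2$ I write $h_K(\theta)=\max_{x\in K}\langle x,(\cos\theta,\sin\theta)\rangle$ for its support function, viewed as a continuous $2\pi$-periodic function of $\theta$. Two classical facts will do all the work. First, $K\mapsto h_K$ is an isometry from the space of compact convex subsets of $\RR^2$ with the Hausdorff distance onto its image in $C([0,2\pi])$ with the supremum norm; in particular $d_H(K,L)=\sup_{\theta}|h_K(\theta)-h_L(\theta)|$. Second, Cauchy's formula expresses the perimeter of a compact convex planar set as $\textrm{perim}(K)=\int_0^{2\pi}h_K(\theta)\,d\theta$, with the convention --- which the integral returns automatically --- that a segment of length $\ell$ has perimeter $2\ell$ and a point has perimeter $0$.

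Granting these, the argument is immediate: from $K_n\to K_\infty$ in the Hausdorff metric I get $\|h_{K_n}-h_{K_\infty}\|_\infty=d_H(K_n,K_\infty)\to 0$, i.e.\ $h_{K_n}\to h_{K_\infty}$ uniformly, and therefore
\[
\bigl|\textrm{perim}(K_n)-\textrm{perim}(K_\infty)\bigr|=\left|\int_0^{2\pi}\bigl(h_{K_n}(\theta)-h_{K_\infty}(\theta)\bigr)\,d\theta\right|\le 2\pi\,\|h_{K_n}-h_{K_\infty}\|_\infty\longrightarrow 0,
\]
which is the claim.

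If one wishes to avoid Cauchy's formula there is a more hands-on sandwich argument, which I would keep as a backup. It relies only on (a) monotonicity of perimeter under inclusion of convex sets --- $A\subseteq B$ convex implies $\textrm{perim}(A)\le\textrm{perim}(B)$, which follows from $h_A\le h_B$, or from the fact that nearest-point projection onto $A$ is $1$-Lipschitz and carries $\partial B$ onto $\partial A$ --- together with (b) the planar Steiner formula $\textrm{perim}(K^{(\epsilon)})=\textrm{perim}(K)+2\pi\epsilon$ for the outer parallel set $K^{(\epsilon)}=\{x\in\RR^2:d(x,K)\le\epsilon\}$. Putting $\epsilon_n=d_H(K_n,K_\infty)$, Hausdorff convergence gives $K_\infty\subseteq K_n^{(\epsilon_n)}$ and $K_n\subseteq K_\infty^{(\epsilon_n)}$, hence $\textrm{perim}(K_\infty)\le\textrm{perim}(K_n)+2\pi\epsilon_n$ and, by symmetry, $\textrm{perim}(K_n)\le\textrm{perim}(K_\infty)+2\pi\epsilon_n$, so that $\bigl|\textrm{perim}(K_n)-\textrm{perim}(K_\infty)\bigr|\le 2\pi\epsilon_n\to 0$.

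The step I expect to need the most care is not the limiting argument above, which is a one-liner once the right language is set up, but the classical input it rests on: proving Cauchy's formula (equivalently, the monotonicity and Steiner statements) for \emph{all} compact convex sets, and pinning down the perimeter convention in the degenerate cases so that the identities hold exactly. This is standard convex geometry and is precisely what is asserted in \cite[Theorem~22.6]{lay2007convex}, so in the write-up it suffices to cite it; moreover, in our application $K_\infty$ is the closed unit disk and each $K_n$ is the convex hull of an inscribed finite point set, all full-dimensional, so the degenerate bookkeeping never arises.
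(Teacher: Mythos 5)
Your argument is correct, but it is worth noting that the paper does not actually prove this lemma at all: it is invoked verbatim as \cite[Theorem~22.6]{lay2007convex}, so your write-up supplies a self-contained proof where the paper only cites the literature (your closing remark that a citation suffices is exactly what the authors do). Both of your routes are sound for compact convex subsets of the plane, which is the only case the paper needs (the application is $\SSS^1$ and convex hulls of inscribed finite sets). The support-function route is the cleaner one: since $d_H(K,L)=\sup_\theta|h_K(\theta)-h_L(\theta)|$ and, by Cauchy's formula, $\textrm{perim}(K)=\int_0^{2\pi}h_K(\theta)\,d\theta$ (with the perimeter of a segment of length $\ell$ read as $2\ell$ and of a point as $0$), Hausdorff convergence gives uniform convergence of support functions and hence $|\textrm{perim}(K_n)-\textrm{perim}(K_\infty)|\le 2\pi\, d_H(K_n,K_\infty)\to 0$; this even yields a quantitative Lipschitz bound, which the bare continuity statement in the paper does not record. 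The sandwich argument via monotonicity under inclusion and the planar Steiner formula $\textrm{perim}(K^{(\epsilon)})=\textrm{perim}(K)+2\pi\epsilon$ gives the same $2\pi\epsilon_n$ bound and is equally valid. The only point to flag is that the lemma as stated in the paper does not specify the ambient dimension, and ``perimeter'' as a Hausdorff-continuous functional is genuinely a planar statement (in higher dimensions one would speak of surface area or mean width, with the appropriate quermassintegral continuity); your proof is explicitly two-dimensional, which matches the intended use in Proposition~\ref{circlealpha}, so nothing is lost.
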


\begin{figure}[h!]
\begin{center}
    \begin{tikzpicture}[>=latex][scale=0.7]
  \begin{axis}[xmin=-1.1, xmax=1.1, ymin=-1.1, ymax=1.1,
               grid=both, grid style=dotted, axis lines=middle,
               axis equal, xlabel style={right}, ylabel style={above},
               xtick={-1,-.5,0,.5,.1}, ytick={-1,-.5,0,.5,1}, xlabel={$x$}, ylabel={$y$},
               yticklabels={-1,-.5,0,.5,.1},
               xticklabels={-1,-.5,0,.5,.1}]
    \draw[black,thick] (110,110) circle (26mm);
    \addplot[thick, dashed, color=red, samples=100, domain=-.9505:-.7390] {-4.6544*(x+.9505)+.3107};
    \addplot[only marks, mark=*, red] coordinates {(1,0)};
    \addplot[only marks, mark=*, red] coordinates {(.5,0.866)};
    \addplot[only marks, mark=*, red] coordinates {(-.558,0.8299)};
    \addplot[only marks, mark=*, red] coordinates {(.7159,0.6982)};
    \addplot[only marks, mark=*, red] coordinates {(-.7390,-0.6737)};
    \addplot[only marks, mark=*, red] coordinates {(0.9840,-.1781)};
    \addplot[only marks, mark=*, red] coordinates {(-.2903,-.9569)};
    \addplot[only marks, mark=*, red] coordinates {(0.8362,0.5484)};
    \addplot[only marks, mark=*, red] coordinates {(-.1837,.9830)};
    \addplot[only marks, mark=*, red] coordinates {(-.9505,.3107)};
    \addplot[only marks, mark=*, red] coordinates {(0.4720,-.8816)};
    \addplot[only marks, mark=*, red] coordinates {(-.5051,.8631)};
    \node[red] at (40, 100) {$d_n$};
    \end{axis}
  \end{tikzpicture}
  \caption{Points from $P_i$.}
\end{center}
\end{figure}
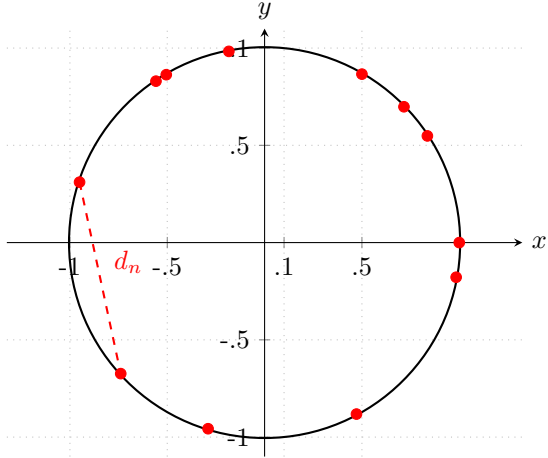

\begin{proposition}\label{circlealpha}
Let $(\SSS^1,d)$ be the metric space with points from $\SSS^1$ and the distance function inherited from $\RR^2$. Then \[|t\SSS^1|_\alpha=\pi t+e^{-t}\]. 
\end{proposition}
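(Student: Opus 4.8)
The plan is to evaluate $|tP_i|_\alpha$ exactly for an arbitrary finite subset $P_i=\{p_1,\dots,p_{n_i}\}\subset\SSS^1$, written in clockwise order, and then to pass to the limit along an arbitrary sequence $P_n\to\SSS^1$ in the Hausdorff metric. The first and main step is to read off the barcode of the alpha filtration of $P_i$ from the geometry of cocircular points. Since the $p_j$ lie on a circle of radius $1$ with centre $O$, the Voronoi cells $V_{p_j}$ are infinite wedges with common apex $O$: two cells $V_{p_j},V_{p_k}$ share a ray emanating from $O$ when $p_j,p_k$ are adjacent on the circle and otherwise meet only in $\{O\}$, and any three Voronoi cells meet only in $\{O\}$. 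From this I would deduce three facts about $\Alpha(P_i)_\epsilon$: (i) the edge $\{p_j,p_{j+1}\}$ joining consecutive points is present iff $\epsilon\ge d(p_j,p_{j+1})/2$ — the chord midpoint lies on the shared Voronoi ray and is its closest point to both endpoints; (ii) an edge between non-adjacent points, or any $2$-simplex, requires a common point of Voronoi cells that meet only in $\{O\}$, hence is present iff $\epsilon\ge 1$ (as $O\in B_\epsilon(p_j)$ iff $\epsilon\ge1$); and (iii) for $\epsilon\ge1$ every $U_{p_j}=B_\epsilon(p_j)\cap V_{p_j}$ contains $O$, so $\Alpha(P_i)_\epsilon$ is the full simplex on $P_i$ and is contractible.

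Consequently, ordering the $n_i$ consecutive chord lengths as $\ell_1\le\cdots\le\ell_{n_i}=M(P_i)$, I get the following barcode (for $i$ large enough that $M(P_i)<1$, which holds eventually by Lemma \ref{chordcont}): in degree $0$, the infinite bar $[0,\infty)$ together with the finite bars $[0,\ell_1/2),\dots,[0,\ell_{n_i-1}/2)$, corresponding to the merges along a spanning path of the cycle graph on $P_i$; in degree $1$, the single bar $[M(P_i)/2,\,1)$, since the cycle is completed exactly when the longest consecutive edge appears and is killed exactly when $O$ is absorbed at $\epsilon=1$; and nothing in degrees $\ge2$. Substituting into Proposition \ref{P:alpha magn} and simplifying (the degree-$1$ birth term $-e^{-\ell_{n_i}t/2}$ combines with the unit coming from the infinite degree-$0$ bar to supply the last term below, leaving $e^{-t}$ from the degree-$1$ death), I obtain
\[
|tP_i|_\alpha\;=\;e^{-t}+\sum_{j=1}^{n_i}\bigl(1-e^{-\ell_j t/2}\bigr),
\]
which has the expected limits $1$ and $n_i$ as $t\to0$ and $t\to\infty$.

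To take the limit, write $L_i=\sum_{j=1}^{n_i}\ell_j$, the perimeter of the inscribed polygon $Q_i=\mathrm{conv}(P_i)$. The elementary inequality $0\le x-(1-e^{-x})\le x^2/2$ for $x\ge0$ gives
\[
\Bigl|\,\sum_{j=1}^{n_i}\bigl(1-e^{-\ell_j t/2}\bigr)-\tfrac{t}{2}L_i\,\Bigr|\;\le\;\frac{t^2}{8}\sum_{j=1}^{n_i}\ell_j^{\,2}\;\le\;\frac{t^2}{8}\,M(P_i)\,L_i.
\]
Now $M(P_n)\to0$ by Lemma \ref{chordcont}; and $Q_n\to\bar{D}$ (the closed unit disc) in the Hausdorff metric, since the convex hull is $1$-Lipschitz for the Hausdorff distance, so by Lemma \ref{percont} we have $L_n=\mathrm{perim}(Q_n)\to\mathrm{perim}(\bar{D})=2\pi$. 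Hence the right-hand side above tends to $0$ and $\tfrac{t}{2}L_n\to\pi t$, so $|tP_n|_\alpha\to e^{-t}+\pi t$. As the sequence $\{P_n\}$ was an arbitrary Hausdorff approximation of $\SSS^1$ by finite sets, the alpha magnitude of $\SSS^1$ exists and equals $\pi t+e^{-t}$.

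The main obstacle is step one: making rigorous the combinatorial structure of the alpha filtration of a finite cocircular set, and in particular pinning down that the unique $1$-cycle is born at exactly $M(P_i)/2$ and dies at exactly $\epsilon=1$. This requires a careful (though elementary) description of the Voronoi diagram of points on a circle together with the verification that no long edge or $2$-simplex appears before $\epsilon=1$; once the barcode is in hand, assembling the formula and taking the limit is a short computation using Lemmas \ref{chordcont} and \ref{percont}.
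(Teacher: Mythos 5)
Your proposal is correct and follows essentially the same route as the paper's proof: the same barcode for a finite cocircular sample (degree-zero bars dying at half the $n-1$ shortest consecutive chord lengths, a single degree-one bar $[M(P_i)/2,1)$), the same closed formula for $|tP_i|_\alpha$, and the same limiting argument driven by Lemma \ref{chordcont} and Lemma \ref{percont}. The differences are only presentational — you justify the barcode explicitly via the Voronoi wedge structure of points on a circle, pass through convex hulls before invoking Lemma \ref{percont}, and replace the paper's Taylor-type derivative bounds by the inequality $0\le x-(1-e^{-x})\le x^2/2$ — which, if anything, fills in details the paper leaves implicit.
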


\begin{proof}
We proceed as follows: \begin{itemize}
    \item We establish the alpha magnitude function of an arbitrary finite subset $P_i$ of $\SSS^1$ which has $d_H(P_i,\SSS^1)<\delta$.
    \item We show that, if finite subsets $\{P_n\}_{n\in\NN}$ converge to $\SSS^1$, then the difference between $|P_i|_\alpha$ and $\pi t+e^{-t}$ is uniformly close to $0$.
\end{itemize} Let $\{P_n\}_{n\in\NN}$ be an arbitrary sequence of finite subsets of $\SSS^1$ converging to $\SSS^1$ in the Hausdorff metric. Let $P_i=\{p_1,p_2,\dots,p_n \}$ be such an element of this sequence that $d_H(P_i,\SSS^1)<\delta<1$. Denote by $d_j=d(p_{j},p_{j+1})$ for $1\leq j\leq n-1$ and $d_n=d(p_n,p_1)$. Without loss of generality assume that $d_n=\max\limits_{1\leq j\leq n}d_j$ (this is easily achieved by a relabeling of $P_i$). The alpha complex for $P_i$ has nontrivial homology only for $H_0$ and $H_1$. In $H_0$, there are $n-1$ bars of length $d_j/2$ for $1\leq j\leq n-1$. There is no bar of length $d_n/2$, as by this time there is only one connected component. There is also one bar which begins at time $0$ and does not die. In $H_1$, there is just one bar beginning at time $d_n/2$, which dies at time $1$. Hence, by the definition of alpha magnitude, we have \[|P_i|_\alpha=1+\left(\sum\limits_{i=1}^{n-1}(1-e^{-d_it/2})\right)-(e^{-d_nt/2}-e^{-t}).\] This completes (1).

For (2), we write the following expression: $$f(t)=\pi t+e^{-t}-1-\left(\sum\limits_{i=1}^{n-1}(1-e^{-d_it/2})\right)+(e^{-d_nt/2}-e^{-t}). $$
 We argue this expression is uniformly close to $0$ as $\delta\to 0$. Since $\{P_n\}_{n\in\NN}$ converges to $\SSS^1$ in the Hausdorff metric, it also converges in the perimeter by \ref{percont}. Further, each chordal distance $d_j\to 0$ as $\delta\to 0$ by Lemma~\ref{chordcont}. Hence, we may select $\delta_0,\delta_1$ such that \[\max\limits_{1\leq j\leq n}d_j<\delta_0,\quad\pi-\sum\limits_{i=1}^n\frac{d_i}{2}<\delta_1.\] As $\delta\to 0$, $\max\{\delta_0,\delta_1\}\to 0$. Then, we simplify $f(t)$ to $$f(t)=\pi t-\left(\sum\limits_{i=1}^{n}(1-e^{-d_it/2})\right).$$
Then $f(0)=0$. Further, 
$$f'(0)=\pi-\sum\limits_{i=1}^n\frac{d_i}{2},$$
and so 
$$0\leq f'(0)\leq \delta_1,$$
where the second inequality follows from our upper bound by $\delta_1$. The second derivative and bounds on it are given by $$0\leq f"(0)=\sum\limits_{i=1}^n(\frac{d_i^2}{4})\leq\delta_0/2\sum\limits_{i=1}^n\frac{d_i}{2}\cdot 1\leq \begin{frac}{\delta_0}{2}\end{frac}\pi.$$
where the second inequality follows from our bound on chordal distance, and the third inequality follows from our bound on perimeter. Thus, 
\[
0\leq f(t)\leq \pi(\max\{\delta_0,\delta_1\})(\frac{t}{2}+\frac{t^2}{4}),\] 
and so $f(t)\to 0$ uniformly as $\delta\to 0$ on any bounded subset of $\RR$. This completes (2). Hence, as before, the alpha magnitude of $\SSS^1$ is $\pi t+e^{-t}$.

\end{proof}
	
	We now consider an example where the alpha magnitude can be shown to not exist.

\begin{figure}
\begin{center}
    \begin{tikzpicture}[>=latex][scale=0.6]
  \begin{axis}[xmin=-0.1, xmax=1.1, ymin=-0.1, ymax=1.1,
               grid=both, grid style=dotted, axis lines=middle,
               axis equal, xlabel style={right}, ylabel style={above},
               xtick={0,.2,.4,.6,.8,1}, ytick={0,.2,.4,.6,.8,1}, xlabel={$x$}, ylabel={$y$},
               yticklabels={0,.2,.4,.6,.8,1},
               xticklabels={0,.2,.4,.6,.8,1}]
    \addplot[only marks, mark=*, red] coordinates {(0,0)};
    \addplot[only marks, mark=*, red] coordinates {(0,0.2)};
    \addplot[only marks, mark=*, red] coordinates {(0,0.4)};
    \addplot[only marks, mark=*, red] coordinates {(0,0.6)};
    \addplot[only marks, mark=*, red] coordinates {(0,0.8)};
    \addplot[only marks, mark=*, red] coordinates {(0,1)};
    \addplot[only marks, mark=*, red] coordinates {(0.2,0)};
    \addplot[only marks, mark=*, red] coordinates {(0.2,0.2)};
    \addplot[only marks, mark=*, red] coordinates {(0.2,0.4)};
    \addplot[only marks, mark=*, red] coordinates {(0.2,0.6)};
    \addplot[only marks, mark=*, red] coordinates {(0.2,0.8)};
    \addplot[only marks, mark=*, red] coordinates {(0.2,1)};
    \addplot[only marks, mark=*, red] coordinates {(0.4,0)};
    \addplot[only marks, mark=*, red] coordinates {(0.4,0.2)};
    \addplot[only marks, mark=*, red] coordinates {(0.4,0.4)};
    \addplot[only marks, mark=*, red] coordinates {(0.4,0.6)};
    \addplot[only marks, mark=*, red] coordinates {(0.4,0.8)};
    \addplot[only marks, mark=*, red] coordinates {(0.4,1)};
    \addplot[only marks, mark=*, red] coordinates {(0.6,0)};
    \addplot[only marks, mark=*, red] coordinates {(0.6,0.2)};
    \addplot[only marks, mark=*, red] coordinates {(0.6,0.4)};
    \addplot[only marks, mark=*, red] coordinates {(0.6,0.6)};
    \addplot[only marks, mark=*, red] coordinates {(0.6,0.8)};
    \addplot[only marks, mark=*, red] coordinates {(0.6,1)};
    \addplot[only marks, mark=*, red] coordinates {(0.8,0)};
    \addplot[only marks, mark=*, red] coordinates {(0.8,0.2)};
    \addplot[only marks, mark=*, red] coordinates {(0.8,0.4)};
    \addplot[only marks, mark=*, red] coordinates {(0.8,0.6)};
    \addplot[only marks, mark=*, red] coordinates {(0.8,0.8)};
    \addplot[only marks, mark=*, red] coordinates {(0.8,1)};
    \addplot[only marks, mark=*, red] coordinates {(1,0)};
    \addplot[only marks, mark=*, red] coordinates {(1,0.2)};
    \addplot[only marks, mark=*, red] coordinates {(1,0.4)};
    \addplot[only marks, mark=*, red] coordinates {(1,0.6)};
    \addplot[only marks, mark=*, red] coordinates {(1,0.8)};
    \addplot[only marks, mark=*, red] coordinates {(1,1)};
    \draw [decorate,decoration={brace,amplitude=8pt},xshift=0pt,yshift=63pt]
(0,0.4) -- (0,25) node [black,midway,xshift=-5mm] 
{\large$\frac{1}{5}$};

    \end{axis}
  \end{tikzpicture}
  \caption{The graph of $A_5$.}
\end{center}
\end{figure}
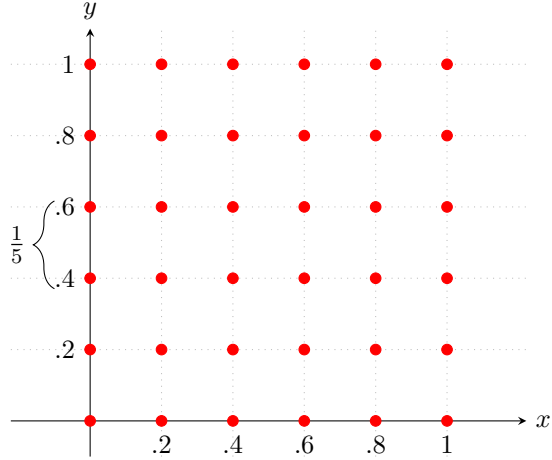
\begin{example}\label{ex:unitsquare}
Let $I^2\subset\RR^2$ be the closed unit square. 
Denote by $\{A_n\}_{n\in\NN}$ a sequence of finite subsets of $I^2$ such that 
\[
A_n=\bigcup\limits_{j=0}^n\bigcup_{k=0}^n\left\{\left(\frac{j}{n},\frac{k}{n}\right)\right\}.    
\]
Then the sequence $\{A_n\}_{n\in\NN}$ converges to $I^2$ in the Hausdorff metric as $i\to\infty$. The alpha complex for $A_n$ has non-trivial homology only for $H_0$ and $H_1$ as a subset of $\RR^2$. For $H_0$, there are $(n+1)^2-1$ bars starting at time $0$ and which die at time $\frac{1}{2n}$, and one bar which starts at time $0$ and does not die. For $H_1$, there are $n^2$ bars starting at time $\frac{1}{2n}$ and dying at time $\frac{1}{\sqrt{2}n}$. Hence, using our definition of alpha magnitude, we compute the alpha magnitude of $A_n$ as follows:

\[
|A_n|_\alpha=1+((n+1)^2-1)(1-e^{-\frac{1}{2n}})-n^2(e^{-\frac{1}{2n}}-e^{-\frac{1}{\sqrt{2}n}}),
\]
and simplify to 
\begin{align*}
|A_n|_\alpha&=1+(n^2+2n)(1-e^{-\frac{1}{2n}})-n^2(e^{-\frac{1}{2n}}-e^{-\frac{1}{\sqrt{2}n}}) \\
&=1+n^2(1-2e^{-\frac{1}{2n}}+e^{-\frac{1}{\sqrt{2}n}})+2n.
\end{align*}
Then, with the immediate observations $2e^{-\frac{1}{2n}}<2$ and $e^{-\frac{1}{\sqrt{2}n}}>1-\frac{1}{\sqrt{2}n}$, we have \begin{align*}
    |A_n|_\alpha\geq 1+n^2\left(1-2+1-\frac{1}{\sqrt{2}n}\right)+2n=1+n\left(2-\frac{1}{\sqrt{2}}\right)
\end{align*}
 which diverges as $n\to\infty$. Hence, it cannot be the case that alpha magnitude exists for the unit square $I^2$, as the limit specified in the definition does not exist for every such convergent sequence.

\end{example}

\section{Alpha magnitude dimension}\label{S:alpha magnitude dimenion}

Motivated by Meckes' work\cite{Meckes2015} in which he related the magnitude dimension with the Minkowski dimension, here we study the relationship between alpha magnitude and fractal dimensions. We first recall Meckes' work, and we then define the alpha magnitude dimension, a number associated to the alpha magnitude of a finite metric space. 

\subsection{Magnitude dimension}\label{SS:mag dim}

Here we recall the results by 
Meckes \cite{Meckes2015}. He first defines the magnitude dimension.

\begin{definition}[Meckes]
	The \textbf{upper magnitude dimension} of $X$ is defined to be $$\overline{\dim}_{\Mag}X\colon=\limsup\limits_{t\to\infty}\dfrac{\log|tA|}{\log t} \,. $$
	The \textbf{lower magnitude dimension} of $X$ is defined to be $$\underline{\dim}_{\Mag}X\colon=\liminf\limits_{t\to\infty}\dfrac{\log|tA|}{\log t}\, . $$ 
	When $$\dim_{\Mag}X\colon=\lim\limits_{t\to\infty}\dfrac{\log|tA|}{\log t} $$ exists, it is equivalent to the previous two expressions, and we define this to be the \textbf{magnitude dimension} of $X$. 
	\end{definition}

Meckes proceeds to prove the following result, relating the magnitude dimension with the Minkowski dimension of compact subsets of Euclidean space.
\begin{theorem}(\cite[Corollary 7.4]{Meckes2015})\label{T:Meckes}
	If $X\subset \RR^n$ is compact and if $\dim_\Mag X$ or $\dim_\Mink X$ exists, then both exist and $\dim_\Mag X=\dim_\Mink X$.
	\end{theorem}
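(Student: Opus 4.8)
Since this is quoted as \cite[Corollary 7.4]{Meckes2015}, I would follow Meckes' strategy, whose core is to sandwich the magnitude function $t\mapsto|tX|$ between two constant multiples of the covering number $N(X,1/t)$ from the definition of Minkowski dimension. First I would observe that it suffices to prove the two \emph{unconditional} equalities $\overline{\dim}_{\Mag}X=\overline{\dim}_{\Mink}X$ and $\underline{\dim}_{\Mag}X=\underline{\dim}_{\Mink}X$: granting these, $\dim_{\Mag}X$ exists precisely when $\overline{\dim}_{\Mag}X=\underline{\dim}_{\Mag}X$, i.e.\ when $\overline{\dim}_{\Mink}X=\underline{\dim}_{\Mink}X$, i.e.\ when $\dim_{\Mink}X$ exists, and then the common value agrees. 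The structural inputs are that the Euclidean metric is of negative type, so $tX$ and each of its finite subsets has a well-defined positive magnitude with $|tX|\geq 1$, and that magnitude is monotone under inclusion of compact subsets of $\RR^n$. One also uses freely that replacing $\epsilon$ by a fixed constant multiple leaves the growth rate $\log N(X,\epsilon)/\log(1/\epsilon)$ unchanged.

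For the lower bound I would pass to a maximal $\rho/t$-separated subset $Y\subseteq X$, with $\rho=\rho(n)$ a fixed constant; then $\#Y$ is comparable to $N(X,\rho/t)$ by the standard packing/covering comparison, and $tY$ is a $\rho$-separated finite subset of $\RR^n$. Since a $\rho$-separated subset of $\RR^n$ meets each ball of radius $R$ in at most $C_n(R/\rho)^n$ points, the row sums of the off-diagonal part of $\zeta_{tY}$ are bounded by a quantity that can be made $<\tfrac{1}{2}$ by taking $\rho$ large; hence $\zeta_{tY}=I+E$ with $\|E\|<\tfrac{1}{2}$, the weighting exists, and $|tY|=\mathbf{u}^{T}\zeta_{tY}^{-1}\mathbf{u}\geq\tfrac{2}{3}\#Y$. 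Monotonicity then gives $|tX|\geq|tY|\gtrsim N(X,\rho/t)$, and taking logarithms and dividing by $\log t$ yields $\overline{\dim}_{\Mag}X\geq\overline{\dim}_{\Mink}X$ and $\underline{\dim}_{\Mag}X\geq\underline{\dim}_{\Mink}X$.

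For the upper bound I would cover $X$ by $N=N(X,1/t)$ closed balls of radius $1/t$, so that by monotonicity $|tX|\leq\bigl|t(\bigcup_{i=1}^{N}B_{1/t}(z_i))\bigr|$, the magnitude of a union of $N$ unit balls in $\RR^n$; it then remains to bound this by $c_n N$ for a dimensional constant $c_n$. This is the step where the positive-definiteness of the similarity kernel is genuinely needed, since magnitude satisfies neither inclusion–exclusion nor an obvious subadditivity under unions; Meckes obtains it from the potential-theoretic description of magnitude together with the fact that a single unit ball has magnitude bounded by a dimensional constant. Granting it, $|tX|\leq c_n N(X,1/t)$, and the same logarithmic manipulation gives the reverse inequalities $\overline{\dim}_{\Mag}X\leq\overline{\dim}_{\Mink}X$ and $\underline{\dim}_{\Mag}X\leq\underline{\dim}_{\Mink}X$.

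The main obstacle is exactly this upper bound: the elementary tools (monotonicity and the Neumann-series estimate for well-separated sets) only deliver the lower inequality, whereas controlling the magnitude of a union of many unit balls from above forces one to invoke the deeper structural features of magnitude on negative-type spaces developed in \cite{Meckes2015}.
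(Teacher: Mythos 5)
You should first note what you are up against: the paper itself does not prove this statement at all --- it is imported verbatim as \cite[Corollary 7.4]{Meckes2015} --- so the only meaningful benchmark is Meckes' own argument, which your sketch is implicitly reconstructing. Your framing is the right one, and the easy half is done correctly: reducing the conditional statement to the two unconditional equalities of upper and lower dimensions is fine, and your lower bound is complete in all essentials --- magnitude of compact subsets of $\RR^n$ is defined and monotone by positive definiteness, a maximal $(\rho/t)$-separated set $Y\subseteq X$ satisfies $\#Y\geq N(X,\rho/t)$, the volume argument bounds how many points of a $\rho$-separated set lie in any ball, the resulting row-sum (Gershgorin) estimate gives $\zeta_{tY}=I+E$ with $\|E\|<\tfrac{1}{2}$ for $\rho=\rho(n)$ large, hence $|tY|\geq\tfrac{2}{3}\#Y$, and monotonicity plus the harmlessness of the rescaling $\epsilon\mapsto\rho\epsilon$ yields $\overline{\dim}_{\Mag}X\geq\overline{\dim}_{\Mink}X$ and $\underline{\dim}_{\Mag}X\geq\underline{\dim}_{\Mink}X$.

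The genuine gap is the upper bound, and you have in effect conceded it: the inequality $|tX|\leq c_n\,N(X,1/t)$ is ``granted'' rather than proved, and it is precisely the nontrivial content of the theorem. Moreover, the mechanism you gesture at --- cover $X$ by $N$ balls of radius $1/t$ and use that a single unit ball has magnitude at most a dimensional constant --- cannot close the argument by itself, for the reason you yourself point out: magnitude is not known to be subadditive, so the magnitude of a union of $N$ unit balls is not controlled by $N$ times the magnitude of one ball, and no amount of monotonicity substitutes for that. In \cite{Meckes2015} this step is not carried by magnitude directly but by an auxiliary quantity: Meckes compares magnitude with maximum diversity (and with an associated capacity for compact subsets of Euclidean space), a quantity which is subadditive under unions, is bounded on sets of bounded diameter, and is shown --- and this is the hard, potential-theoretic input --- to dominate magnitude up to a dimensional constant for compact $X\subset\RR^n$; only then does covering by $N(X,1/t)$ balls give $|tX|\lesssim_n N(X,1/t)$ and hence the reverse inequalities between the dimensions. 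As written, your proposal reproduces the easy half of Meckes' proof, correctly identifies where the difficulty sits, but leaves that difficulty as an appeal to the very result being proved; to be a proof it would need to either establish the magnitude-versus-diversity (or capacity) comparison or supply some other uniform-in-$t$ bound of $|tX|$ by covering numbers.
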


Using the result from this theorem, one could use the magnitude to approximate the Minkowski dimension, as done for instance  by Willerton\cite{W09}. However, we note that this method has substantial limitations: the computational complexity of magnitude makes computations prohibitive for many of the spaces that we are interested in in practice, and also limits substantially the size of subsamples that we can take. We note that the complexity of the computation of magnitude depends on the complexity of matrix inversion, which is known to be bounded below by $\Omega(n^2\log n)$ for an invertible  $n\times n$ matrix \cite{RR02}.

\subsection{Alpha magnitude dimension}\label{SS:alpha mag dim}
Motivated by the relationship between the magnitude dimension and the Minkowski dimension, we use alpha magnitude to define a similar notion of dimension.

\begin{definition}
Let $(X, d)$ be  a metric space with an everywhere-defined alpha magnitude function. We define the \define{alpha magnitude dimension} of $(X, d)$ to be the non-negative real number
\[\text{dim}_{\alpha}X\colon=\lim\limits_{t\to\infty}\dfrac{\log|tX|_{\Alpha}}{\log t} \, ,
\]
whenever this limit  exists. 
\end{definition}

\begin{remark}
We note that the assumption that the alpha magnitude function is defined everywhere is not a limitation for the examples that we consider in our work. The definition and results  in this work may be relaxed to account for spaces with alpha magnitude functions that are not defined for at most a finite number of points.
\end{remark}

\begin{example}
For any finite set $X\subset\RR^d$, we have $|X|_\alpha\leq \#X$. Hence,  $$\lim\limits_{t\to\infty}\dfrac{\log |tX|_\alpha}{\log t}=0 $$ and so $\dim_{\alpha}(X)=0$.
\end{example}

\begin{example}
Let $I\subset\RR$ denote the unit interval. By Example \ref{unitint} we know that $|tI|_\alpha=1+\frac{t}{2}$. 
Then we have 
\[
\lim\limits_{t\to\infty}\dfrac{\log (\frac{t}{2}+1)}{\log t}=1\, ,
\]
and hence $\dim_{\alpha}(I)=1.$
\end{example}

\begin{example}\label{E:amd cantor}
	We  computed  the alpha magnitude of the middle thirds Cantor set $C$ in Proposition  \ref{cantoralpha}. We know that 
	\[
	|C|_\alpha=1+\sum\limits_{j=0}^{\infty}2^{j}\left(1-e^{-(1/2)(1/3)^{j+1}}\right)\, .\]
	A similar argument shows 
	\[
	|3C|_\alpha=1+\sum\limits_{j=0}^{\infty}2^{j}\left(1-e^{-(1/2)(1/3)^{j}}\right)\]
	and so

	\[|3C|_\alpha=2|C|_\alpha-1+(1-e^{-1/2})=2|C|_\alpha-e^{-1/2}\, . \]
	Repeating this process for $|3^nC|_\alpha$ we have that \[
	2^n|C|_\alpha-\sum\limits_{k=0}^{n-1}2^{n-k-1}e^{-(1/2)(3^k)}=|3^nC|_\alpha.\]
	Then 
	\[
	\lim\limits_{n\to\infty}\dfrac{\log\left(2^n|C|_\alpha-\sum\limits_{k=0}^{n-1}2^{n-k-1}e^{-(1/2)(3^k)}\right)}{\log(3^n)} \]
	is an expression approaching the alpha magnitude dimension of the Cantor set. We now introduce the following bounds: 
	$$\dfrac{\log(2^n|C|_\alpha-2^{n}e^{-1/2})}{\log(3^n)} \leq \dfrac{\log\left(2^n|C|_\alpha-\sum\limits_{k=0}^{n-1}2^{n-k-1}e^{-(3^k)}\right)}{\log(3^n)} \leq \dfrac{\log(2^n|C|_\alpha-2^{n-1}e^{-1/2})}{\log(3^n)}. $$
	The larger expression removes terms from the negative sum. The smaller expression sends all the terms in the negative sum to $e^{-1/2}$ which is at least as large as the rest. Further, we observe that 
	\[
	|C|_\alpha>1>e^{-1/2}\, .
	\]
	Since the limit of both the expression on the left and the right after factoring is ${\log(2)}/{\log(3)}$, we conclude that the alpha magnitude dimension of the Cantor set is equivalent to its Minkowski dimension by the Sandwich Theorem. 
\end{example}

\begin{example}\label{E: amd circle}
    We compute the alpha magnitude of the circle in Proposition \ref{circlealpha}, where we show that $|t\SSS^1|_\alpha=\pi t+e^{-t}$. Then  $$ \lim\limits_{t\to\infty}\dfrac{\log|t\SSS^1|}{\log t}=1,$$ which matches the Minkowski dimension of the circle. 
\end{example}

The previous examples lead us to state the following conjecture, which is further supported by the heuristic approximations that we perform in Section \ref{S:experiments}.
\begin{conjecture}
    For $X\subset \RR^n$ compact, when $\text{dim}_{\alpha}X$ exists, then \[\dim_{\alpha}X=\dim_{\Mink}(X) . \]
\end{conjecture}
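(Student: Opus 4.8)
The plan is to adapt Meckes' proof of Theorem~\ref{T:Meckes} to the alpha setting, replacing his analytic estimates on the magnitude function by a geometric handle on the alpha magnitude function via Euler characteristics of thickenings. The first step is an integral representation: for a finite subset $A\subset\RR^n$, combining the barcode decomposition with the identity $e^{-at}-e^{-bt}=\int_a^b t e^{-t\epsilon}\,d\epsilon$ and the nerve lemma, one rewrites
\[
|tA|_\alpha=\int_0^\infty t\, e^{-t\epsilon}\;\chi\Big(\bigcup_{a\in A}B_\epsilon(a)\Big)\,d\epsilon ,
\]
where $\chi$ denotes the Euler characteristic of the $\epsilon$-thickening (the signed bar-indicator functions sum to $\chi$ of each sublevel complex). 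Splitting $\chi=\beta_0-(\beta_1-\beta_2+\cdots)$, the degree-zero contribution to $|tA|_\alpha$ is exactly $1+\sum_j(1-e^{-t\ell_j/2})$, where $\{\ell_j\}$ are the edge lengths of a minimum spanning tree of $A$ (equivalently, the single-linkage merge scales). One then argues that, when $\dim_\alpha X$ exists, this degree-zero part converges along every finite approximation of $X$ to a quantity $m_0(X,t)=1+\sum_j(1-e^{-t\lambda_j})$ for a limiting multiset $\{\lambda_j\}$ of ``edge scales of $X$'', and that $\#\{j:\lambda_j\ge\epsilon\}$ equals, up to $\pm1$, the number of connected components of the closed $\epsilon$-neighbourhood $X^{(\epsilon)}$, hence is sandwiched between a packing number $P(X,c\epsilon)$ and a covering number $N(X,c'\epsilon)$.

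From here a Tauberian argument applied to $t\mapsto m_0(X,t)$, together with these packing/covering bounds and the definitions of upper and lower Minkowski dimension, shows that $\liminf_{t\to\infty}\log m_0(X,t)/\log t$ and $\limsup_{t\to\infty}\log m_0(X,t)/\log t$ both lie in the interval $[\underline{\dim}_{\Mink}X,\overline{\dim}_{\Mink}X]$; this is the step at which Meckes' covering-number estimates enter, and it already reproduces the computed values for the interval and the Cantor set, where $\beta_{\ge1}=0$. It then remains to show that the higher-degree bars do not shift this exponent: writing $R(t)=\sum_{k\ge1}\sum_i(-1)^k(e^{-a_{k,i}t}-e^{-b_{k,i}t})$ for the alpha complex of $X$, one wants $\limsup_{t\to\infty}\log|R(t)|/\log t\le\dim_{\Mink}X$ under the standing hypothesis that $\dim_\alpha X$ exists. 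Granting this (and that $m_0(X,t)<\infty$), $|tX|_\alpha=m_0(X,t)+R(t)$ has $\log|tX|_\alpha/\log t$ squeezed into $[\underline{\dim}_{\Mink}X,\overline{\dim}_{\Mink}X]$; since $\dim_\alpha X$ is assumed to exist the limit exists, forcing $\underline{\dim}_{\Mink}X=\overline{\dim}_{\Mink}X$ and $\dim_\alpha X=\dim_{\Mink}X$.

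The main obstacle is the control of $R(t)$, i.e.\ of the alternating sum of higher Betti numbers of the thickenings $X^{(\epsilon)}$ integrated against $t e^{-t\epsilon}$: one needs it to be $O(t^{\,\dim_{\Mink}X+o(1)})$. Crude simplex counts for the Delaunay (alpha) complex only bound it by $O(t^{\,s\lceil n/2\rceil})$, which is far too weak, and for $X=I^2$ no bound of the right degree holds and alpha magnitude indeed fails to exist (Example~\ref{ex:unitsquare}). Thus the argument must genuinely use that the limit $|tX|_\alpha$ exists --- for instance, by proving that existence of this limit forces the limiting thickenings to have total Betti number $o\big(N(X,\epsilon)^{1+\delta}\big)$ for every $\delta>0$, making the higher-homological content negligible against $m_0(X,t)$. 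A secondary difficulty is the comparison in the first paragraph for sets that are not Ahlfors regular (such as $\QQ\cap I$), where the edge-scale distribution must be matched to covering numbers with more care and the upper and lower Minkowski dimensions treated separately; and one must also rule out the degenerate scenario in which $m_0(X,t)=+\infty$ and $R(t)=-\infty$ while $|tX|_\alpha$ is nonetheless finite.
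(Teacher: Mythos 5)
First, note what you are proving against: this statement is a \emph{conjecture} in the paper. The authors give no proof --- only the worked examples (the interval, the Cantor set in Proposition \ref{cantoralpha} and Example \ref{E:amd cantor}, the circle in Proposition \ref{circlealpha} and Example \ref{E: amd circle}) and the numerical heuristics of Section \ref{S:experiments}. So there is no argument in the paper for your proposal to coincide with or diverge from; the only question is whether your sketch actually settles the conjecture, and it does not.

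There are two genuine gaps. The first is a step that fails as stated: you claim that the degree-zero part of the limiting alpha magnitude is governed by a multiset of ``edge scales'' $\{\lambda_j\}$ with $\#\{j:\lambda_j\geq\epsilon\}$ equal, up to $\pm 1$, to the number of connected components of the $\epsilon$-thickening $X^{(\epsilon)}$, and hence sandwiched between packing and covering numbers. This is false for connected $X$: for the unit interval or $\SSS^1$ the thickening is connected for every $\epsilon>0$, so your count is identically $1$ and the proposed sandwich gives dimension $0$, yet $m_0(X,t)$ grows like $t$ (Lemma \ref{unitint}, Proposition \ref{circlealpha}). The dimension information in degree zero sits in the accumulation of infinitesimally short minimum-spanning-tree edges --- i.e.\ in sums of (powers of) bar lengths, as in the $PH_0$-dimension literature --- not in counting components of thickenings, and relating that quantity to Minkowski dimension for arbitrary compact $X$ (including non-Ahlfors-regular sets, where upper and lower Minkowski dimensions must be handled separately) is itself nontrivial and not supplied. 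The second gap you name yourself but do not close: the control of $R(t)$, the alternating higher-degree contribution. Your only proposal is that existence of $|tX|_\alpha$ should force the higher Betti numbers of the thickenings to be negligible relative to $N(X,\epsilon)$, but no mechanism is given, and Example \ref{ex:unitsquare} shows exactly how higher-degree bars can dominate and destroy existence; nothing in your sketch rules out a compact set for which the limit exists while higher-degree bars shift the growth exponent, nor the cancellation scenario you flag at the end. Since this is precisely where the difficulty of the conjecture lies, the proposal is a plausible programme (in the spirit of Meckes' Theorem \ref{T:Meckes}) rather than a proof.
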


\section{Experiments}\label{S:experiments}

Here we show how the alpha magnitude can be used to estimate the Minkowski dimension from samples. We first estimate the dimensions of spaces for which we have already verified that the alpha magnitude dimension exists. We then proceed to estimate the alpha magnitude dimension of the Feigenbaum attractor, for which it is not known how to compute fractal dimensions precisely, but several different estimates have been put forward.

We note that there have been several efforts in using  persistent homology to develop methods to estimate known fractal dimensions, we refer the reader to the comparative study \cite{JS20} of several of these efforts and references therein for further details. The reader may  refer to \cite{PG83,gneiting} and references therein for additional estimates of dimension that do not make use of persistent homology.

\subsection{Methodology}\label{SS:methodology}

Given a metric space $X$, our aim is to estimate the  limit

\begin{equation}\label{E:limit alpha mag}
\lim_{t\to\infty}\frac{\log |tX|_\alpha}{\log t}\, . \end{equation}

\noindent
 We proceed by studying the following expression for a sequence of finite metric spaces $\{X_n\}$ sampled uniformly at random approaching $X$ in the Hausdorff metric

\begin{equation}
    \label{E:limit approx alpha mag}
\lim_{t\to\infty}\lim_{n\to\infty}\frac{\log |tX_n|_\alpha}{\log t}. \end{equation}

\begin{figure}[ht!]
\begin{alignat}{2}
&\notag
    \includegraphics[scale=.35]{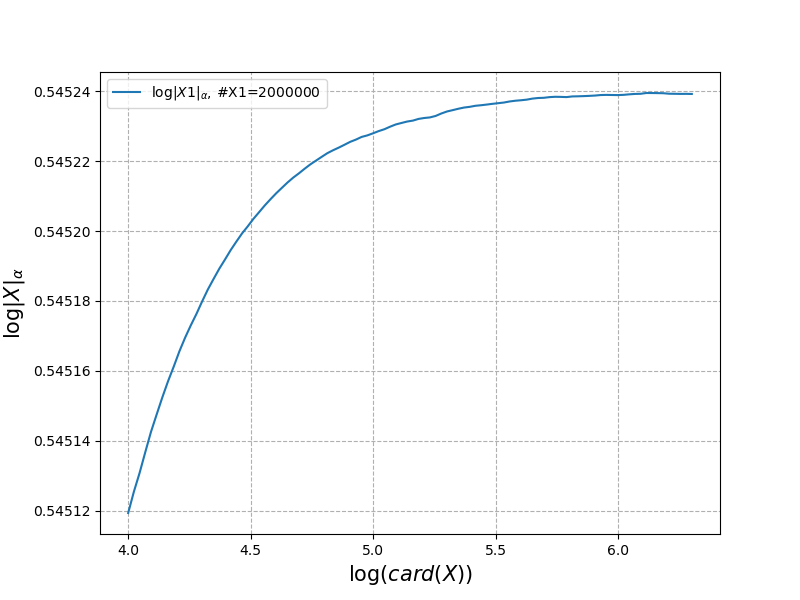}
&&\notag  \includegraphics[scale=.35]{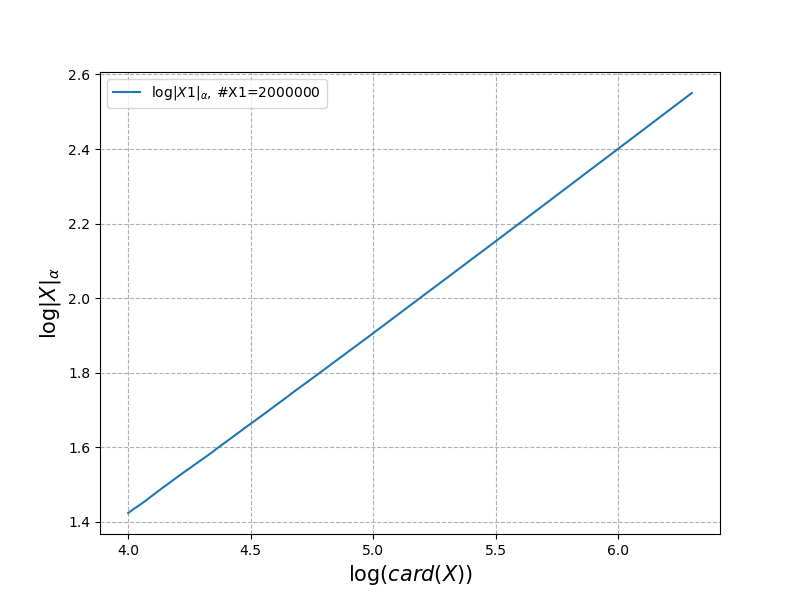}\\
    &\notag\qquad \qquad \qquad \qquad\qquad \qquad(a)
    &&\notag \qquad \qquad\qquad \qquad\qquad \qquad(b)\\
    &\notag
    \includegraphics[scale=.35]{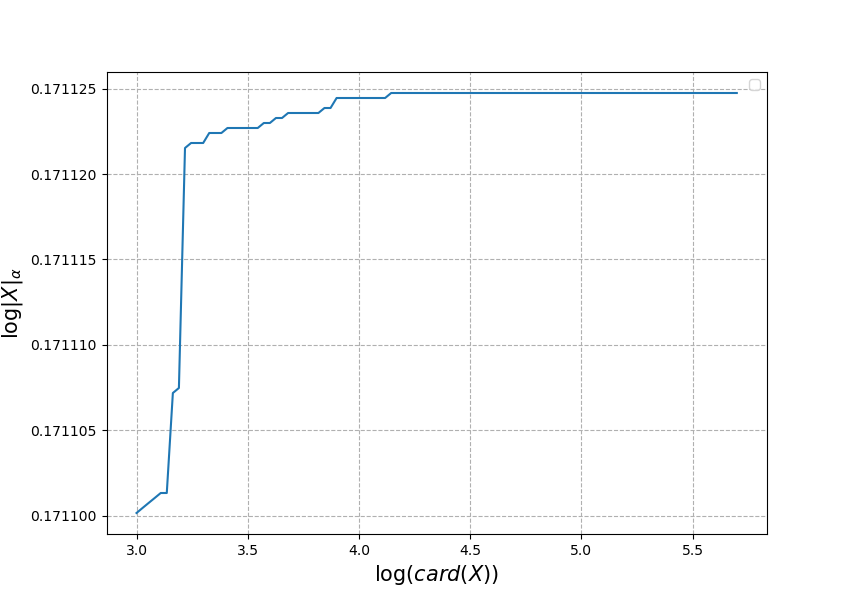}
&&\notag \includegraphics[scale=.35]{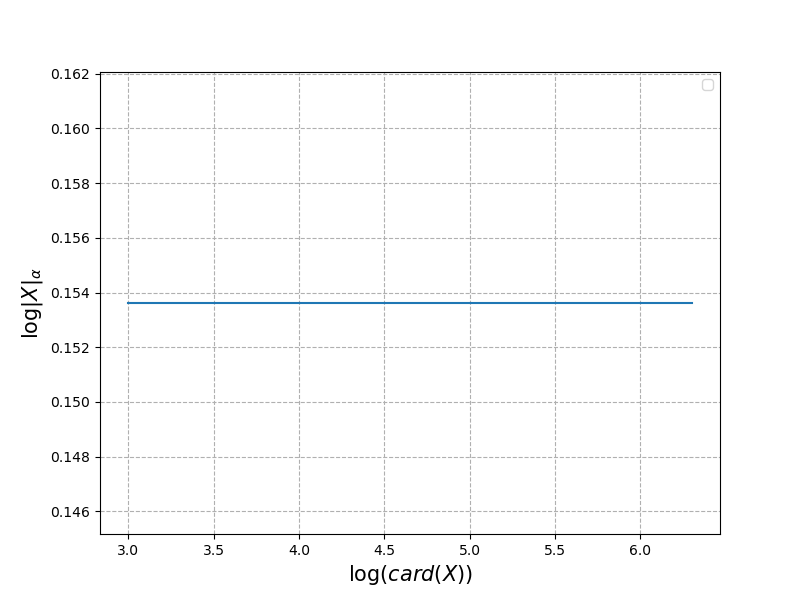}\\
    &\notag\qquad \qquad \qquad \qquad\qquad \qquad(c)
    &&\notag \qquad \qquad\qquad \qquad\qquad \qquad(d)
\end{alignat}
\caption{
(a) 
      We reproduce a plot of the  alpha magnitude of a given sample of points  from the unit circle against the cardinality of the sample under a  logarithmic scale. 
     (b)  We reproduce a plot of the  alpha magnitude of a given sample of points  from the unit square against the cardinality of the sample under a  logarithmic scale. The growth of the alpha magnitude compared to the sample size taken from the unit square can be seen to not only fail to `settle down', but even seems to grow linearly under the log scale. (c)-(d) are plots for the Cantor set and Feigenbaum attractor respectively, in the fashion of (a).
}
\label{fig:circex}
  \end{figure}

\noindent
For this, we take $n$ sufficiently large so that $\left||tX_n|_\alpha - |tX_{n-1}|_\alpha\right |$ is small enough so that we may consider $|tX_n|_\alpha$ representative of $|tX|_\alpha$. For the datasets that we consider, this means that we choose $n$ such that  $\left||tX_n|_\alpha - |tX_{n-1}|_\alpha\right |<1\cdot10^{-5}$ for $t=1$.  
 In  Figure \ref{fig:circex} a) we provide an illustration of this convergence for the circle. We note that the alpha magnitude of $\SSS^1\subset\RR^d$ exists, as we have proven occurs for sufficiently dense sequences of points with respect to Hausdorff distance taken from the unit circle, see Proposition \ref{circlealpha}. We illustrate a lack of convergence in Figure \ref{fig:circex} (b) for samples taken from the unit square, that is, a space for which the alpha magnitude does not stabilise, see Example \ref{ex:unitsquare}. 
  
 Once we have chosen an $n$ so that $|tX_n|_\alpha$ may be considered a representative of $|tX|_\alpha$, we proceed to  plot the numerator $\log |tX_n|_\alpha$ against the denominator $\log t$ and apply a linear regression plot to determine whether the expression follows a power rule for the interval we determine to be representative. This can be seen as, in some sense, an application of l'Hopital's rule, as we are essentially evaluating the limit by estimating the slope. For the circle and Cantor set, we determine the interval $[1.75,\log(n)-2]$ to best represent a following of a power rule through observation. However, the Feigenbaum attractor requires different bounds to best observe a power rule for the purpose of estimating dimension, and so we conclude that an appropriate choice of bounds depends largely on the underlying data set. 

\begin{figure}[ht!]
\begin{alignat}{2}
&\notag
    \includegraphics[scale=.4]{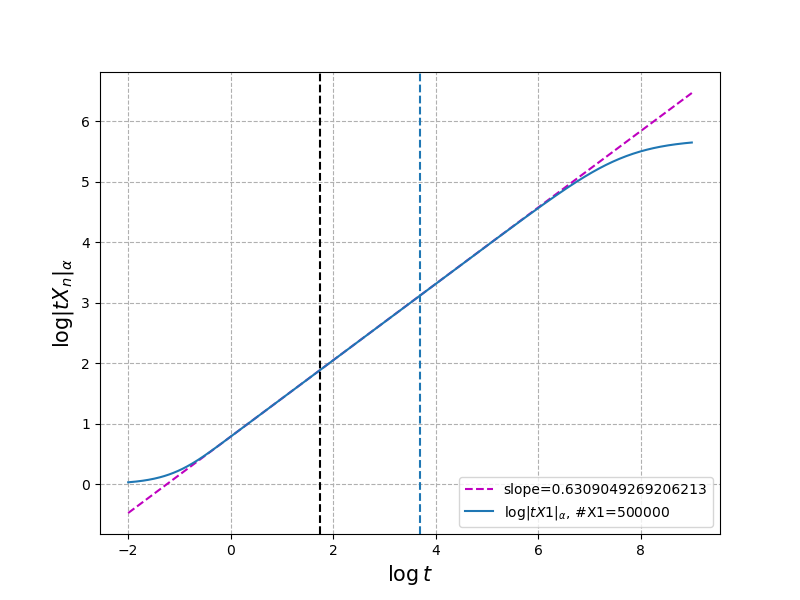}
&&\notag  \includegraphics[scale=.4]{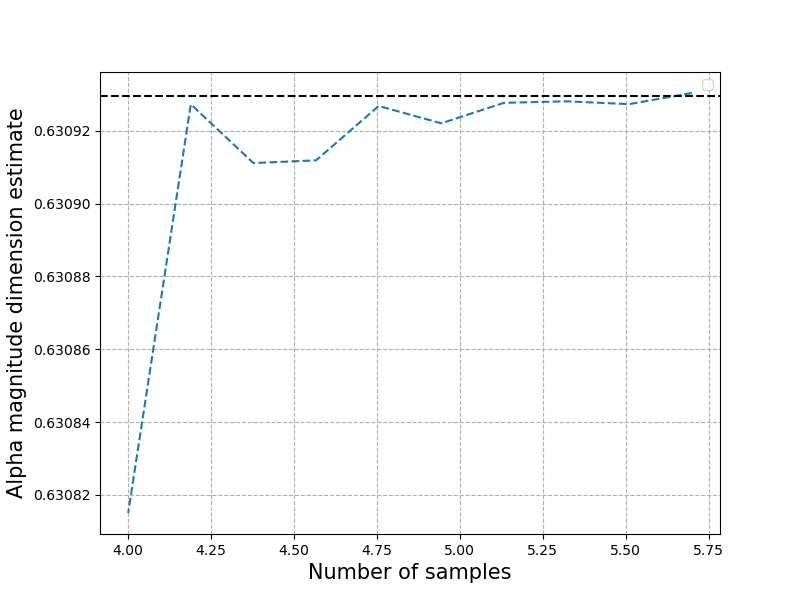}\\
    &\notag\qquad \qquad \qquad \qquad\qquad \qquad(a)
    &&\notag \qquad \qquad\qquad \qquad\qquad \qquad(b)\\
    &\notag
    \includegraphics[scale=.4]{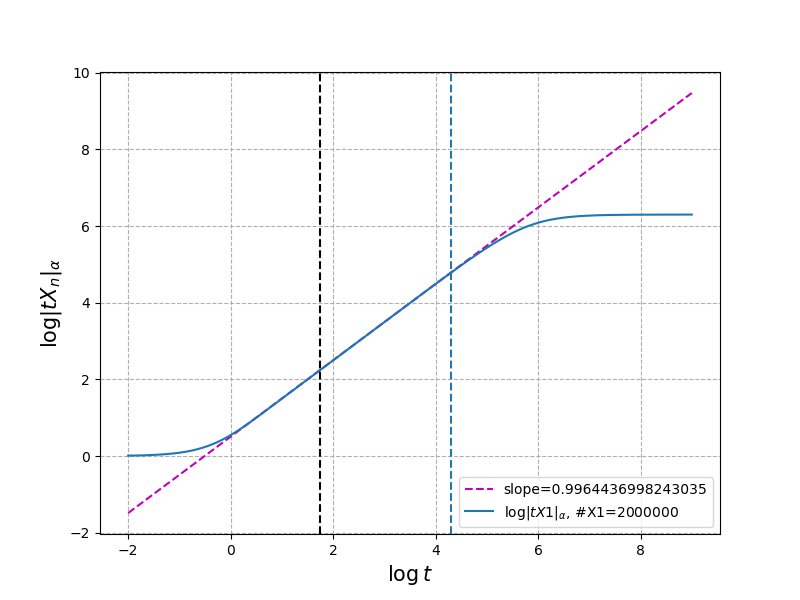}
&&\notag \includegraphics[scale=.4]{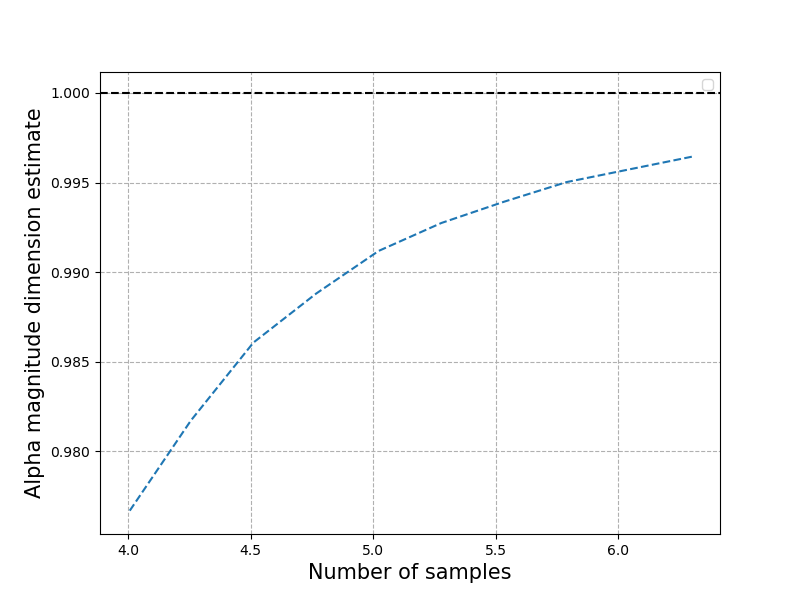}\\
    &\notag\qquad \qquad \qquad \qquad\qquad \qquad(c)
    &&\notag \qquad \qquad\qquad \qquad\qquad \qquad(d)
\end{alignat}
\caption{
(a) Log-log alpha magnitude plot for a sample $X$ of $5*10^5$ points sampled from the Cantor set at depth level $100$. The vertical dashed lines represent the left and right endpoint of the range over which we estimate the exponent of the power law through linear regression. 
(b) Plot of the estimate resulting from sample sizes spaced logarithmically. 
(c) Log-log alpha magnitude plot for a sample of $2*10^6$ points from $\SSS^1$. The estimate is close to the true alpha magnitude dimension $1$. (d) Different estimates of the alpha magnitude dimension of $\SSS^1$ resulting from subsamples of different sizes.
}
\label{cantor-circle}
  \end{figure}

\subsection{Cantor Set}

We compute the alpha magnitude of the middle-thirds Cantor set in Proposition 
 \ref{cantoralpha}, and its alpha magnitude dimension in Example \ref{E:amd cantor}. We demonstrate the methodology  from Section \ref{SS:methodology} by obtaining an estimate of the alpha magnitude dimension of the Cantor set computed from a sample of points. We sample $5*10^5$ points uniformly at random  from the 100th level of the middle-thirds Cantor set. For each point, sampling is done through coin flips to determine which interval in each level of depth the point resides in, then a number in the closed interval $[0,1]$ is selected uniformly at random to determine where in the interval the point is. 
 
 In Figure \ref{cantor-circle}(a) we illustrate the log-log alpha magnitude plot for our sample of points $X$, while in Figure \ref{cantor-circle}(b) we illustrate different estimates of the alpha magnitude dimension for subsamples of $X$ of different sizes. We note that as the size of the subsamples increases, the estimates become more accurate. We provide details about how these estimates were obtained though the methodology discussed at the beginning of this section  in Figure \ref{fig:cant-circ-multiregplot}. Here we can observe the progression of the alpha magnitudes of each subsample, as well as ten corresponding logarithmically spaced ranges over which we perform the power-law estimation. 
The slope that we obtain from our estimations is very accurate: it is approximately $0.6309$ for all sizes of subsamples greater than $10^4$, while it is  approximately $0.6308$ for the sample of size $10^4$.  The actual alpha magnitude dimension of the Cantor set is $\frac{\log 2}{\log 3} \approx 0.6309$. Larger data sets roughly correspond with better estimates of dimension, though all estimates are highly accurate.

\begin{figure}[h!]
 \begin{alignat}{2}
  \notag &\includegraphics[scale=.31]{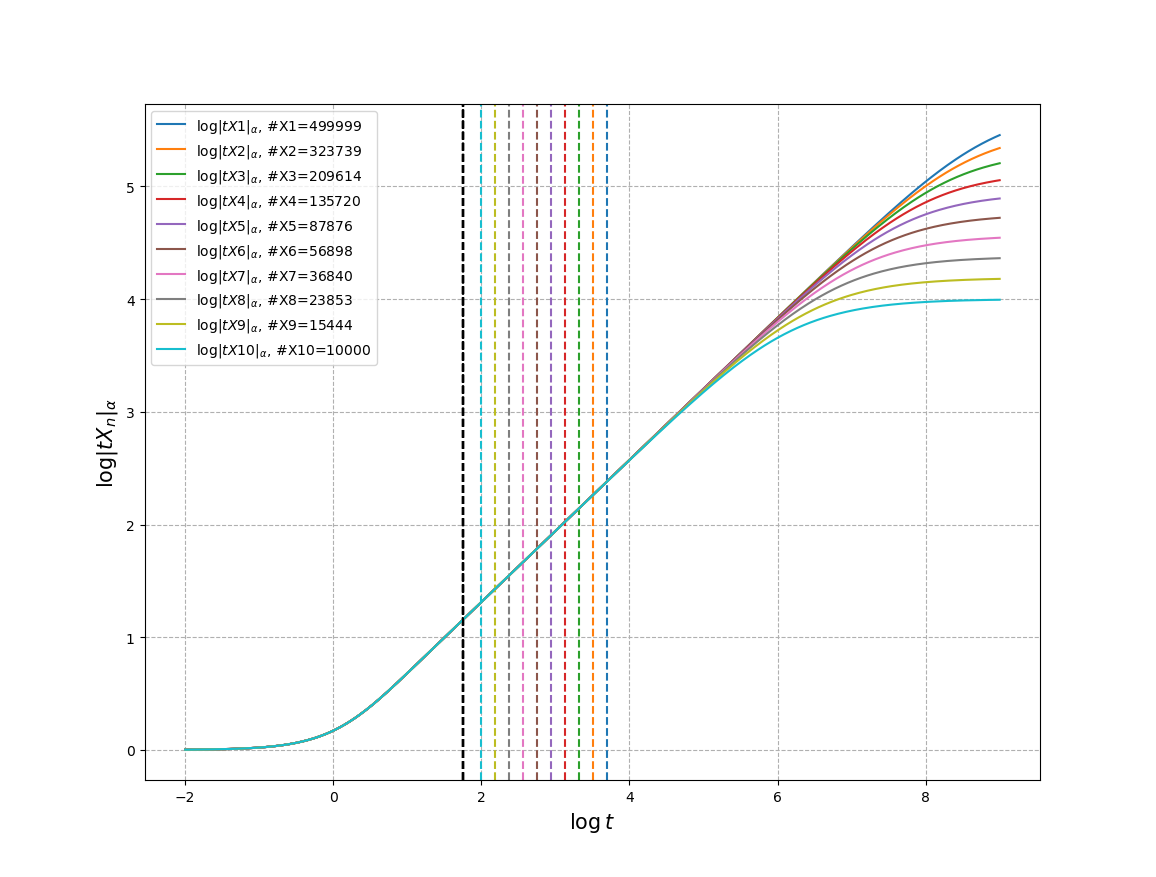} & &
     \includegraphics[scale=.31]{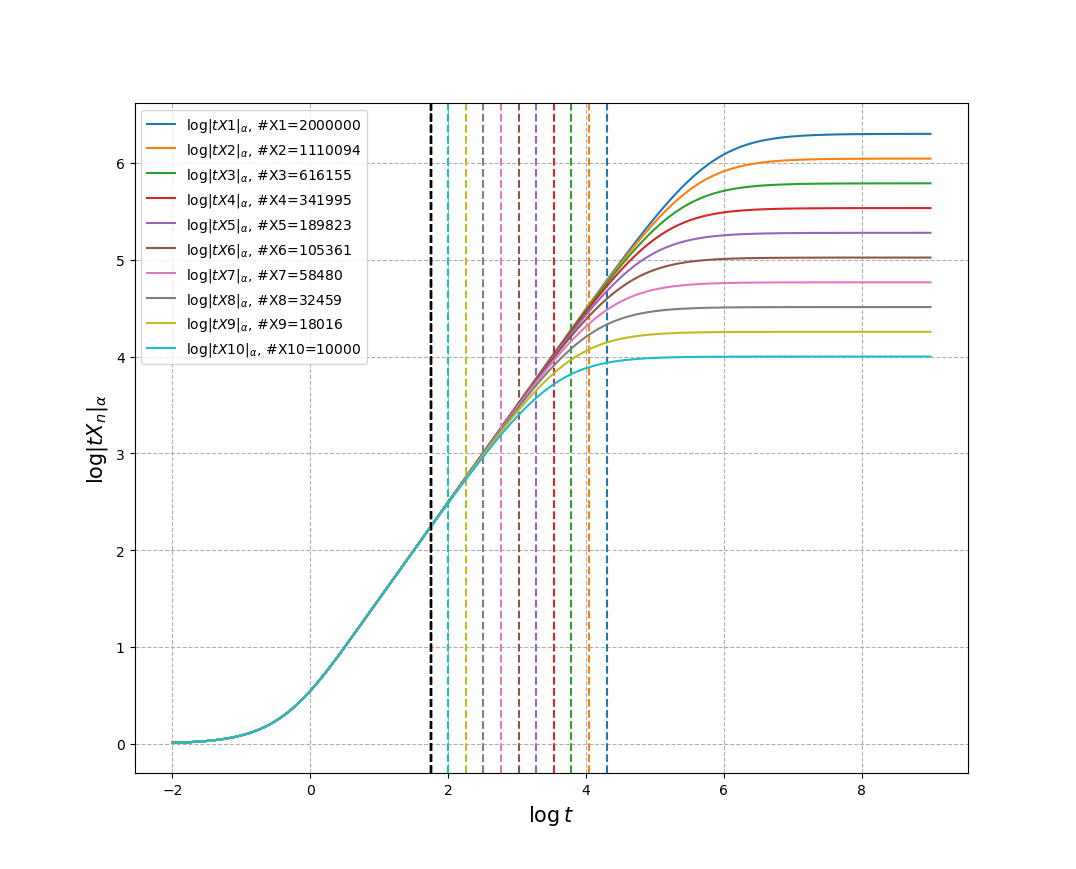}\\
     \notag &(a) && (b)
     \end{alignat}
    \caption{ (a)
    Multiple regression plots for samples of points of size varying from $10^4$ to $5*10^6-1$ taken from the Cantor set. We note that the sizes were chosen so as to be spaced logarithmically. We plot the different ranges over which we estimate the power law with dashed lines: the black dotted lines correspond to the left endpoint, which is the same for all regression plots, while the right endpoints are indicated by dashed lines in the same colour as the colour used to plot the alpha magnitude for a specific sample. 
    For instance, if we consider the dark blue dashed line,  this corresponds to the right endpoint of the range over which we estimate the power law for the alpha magnitude of the sample of $5*10^6-1$ points. 
    We note that additional points produce a slightly more accurate estimate, and cause the interval within which the plot follows a power rule to become wider. We could justifiably take the bounds to be larger in this case, but the use of more restrictive bounds is necessary in other cases and doesn't substantially affect the estimate here, so we constrain these for the purpose of consistency.
    (b) Multiple log-log plots for alpha magnitudes of samples from the unit circle. We again observe extended convergence arising from additional samples. The bounds need to be chosen more restrictively in this case, but additional points still seem to extend the interval of convergence in a linear fashion under logarithmic scale.
    }
    \label{fig:cant-circ-multiregplot}

\end{figure}

\subsection{Circle}
We next consider the unit circle $\SSS^1\subset \mathbb{R}^2$. 
In Proposition \ref{circlealpha} we compute the alpha magnitude of $\SSS^1$, and in Example \ref{E: amd circle} its alpha magnitude dimension. Here we sample $2* 10^6$ points uniformly at random from the circle. We then proceed to estimate the alpha magnitude for $\SSS^1$ in the manner described above. Once again, our estimate is close to the true alpha magnitude dimension as illustrated from Figure \ref{E: amd circle} (a). We illustrate how the ranges over which we estimate the power law change as the size of the samples changes in Figure \ref{fig:cant-circ-multiregplot}(b), while in Figure \ref{cantor-circle} (d) we provide the estimates obtained for these ranges. We observe that the estimates become more accurate as the number of points increases.

\subsection{Feigenbaum Attractor}

The Feigenbaum attractor  is a subset $F\subset I\subset\mathbb{R}$ generated by the recursive function 
\begin{equation}\label{E:feigenbaum}
x_{n+1}=a x_n(1-x_n) 
\end{equation}
where $a\approx 3.56995$. The equation $x_{n+1}=a x_n(1-x_n)$ is called the ``logistic map'' or ``logistic difference equation'' and describes the growth of a population: for $0<x_n<1$ and $1<a < 4$ the equation describes non-trivial dynamical behaviour, namely, the growth of a population that does not become extinct. We illustrate the attractors of the logistic map for different values of the parameter $a$ in Figure \ref{feigenbaum}(a).
 The   value of $a$ corresponding  to the Feigenbaum  attractor is  the point at which the `` `chaotic' region begins'' \cite{RM76}.

For the Feigenbaum attractor, the values taken by the different measures of fractal dimensions are only known approximately, since it is not known how to calculate them precisely.  The Hausdorff dimension of the Feigenbaum attractor has been computed to be approximately $0.538$ \cite{PG81}, the correlation dimension to be $0.5\pm.005$, and the information dimension to be $0.517$ \cite{PG83}. Here we estimate the alpha magnitude dimension of the Feigenbaum attractor to be approximately $0.497$.

\begin{figure}[t]
    \begin{alignat}{2}
&\notag \includegraphics[scale=.4]{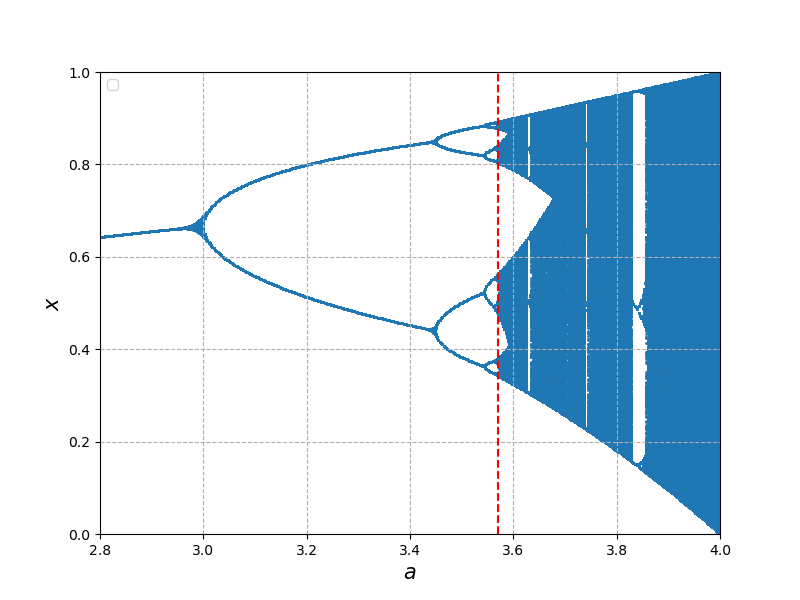}
&&\notag  \includegraphics[scale=.4]{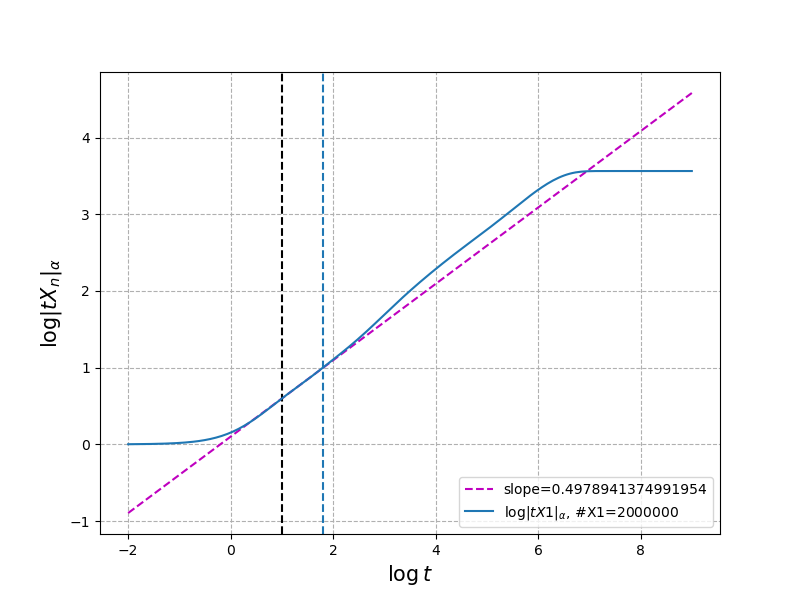} \\
    &\notag\qquad \qquad \qquad \qquad\qquad \qquad(a)
    &&\notag \qquad \qquad\qquad \qquad\qquad \qquad(b)
\end{alignat}
     \caption{(a)  Plot of attractors of the logistic map for different values of $a$ in Equation \ref{E:feigenbaum}. The Feigenbaum attractor is the vertical slice of points occurring at $a=3.56995$, marked in red. (b) Log-log plot for points sampled from the Feigenbaum attractor. For this estimate, the appropriate bounds for convergence were shifted slightly. For the choice of range we note that our experiments show that the datasets are more informative for lower values of $t$, in other words, when the points are closer to each other. 
     }
     \label{feigenbaum}
\end{figure}

\section{Conclusion}
Motivated by the study of point-cloud data, in our work we introduce a
new invariant of metric spaces, called alpha magnitude. This rests on the
definition of persistent magnitude introduced by Govc and Hepworth, and has
the advantage that it can be easily computed for medium-sized data sets.
We compute alpha magnitude for several compact subsets of Euclidean space, and conjecture that it encodes the Minkowski dimension of compact subsets of Euclidean space, whenever it is defined.  
Motivated by this conjecture, as well as by the computational advantages of alpha magnitude over magnitude, we propose that alpha magnitude can be used to estimate the fractal dimension of  datasets exhibiting self-similar properties, which is a direction that we plan to pursue in future work. 

\section{Data sets and code availability}

The data sets and code for the computations performed in this paper can be found at \url{https://github.com/miguelomalley/alpha-magnitude}.

\bibliography{EucMagContPrf}
\bibliographystyle{alpha}

\end{document}